\def\comment#1{}
\def\NI{\noindent}
\def\ni{\noindent}
\def\sk{\smallskip}
\def\term#1{{\bf #1}\marginpar{\raggedright{\small\it #1}}}
\def\nct#1{{\bf #1}} 
\def\ITEMMACRO #1 ??? #2 ???{\par\vskip4pt\noindent%
\hangindent=#2em\setbox0\hbox{#1\kern4pt}%
\ifdim\wd0<\hangindent\setbox0\hbox to\hangindent{\hss#1\kern7pt}\fi%
\box0\ignorespaces}
\def\Item(#1){\ITEMMACRO {\rm (#1)} ??? 1.8 ???}
\def\BrackItem[#1]{\ITEMMACRO [#1] ??? 1.8 ???}
\def\Note{\ITEMMACRO {\NI\bf Note.}  ??? 1.8 ??? \\ \bgroup\sf}
\def\EndNote{\par\egroup\medskip\ni}
\newtheoremstyle{meiner} 
    {4pt}{3pt}           
    {\sffamily\itshape}  
    {}                   
    {\sffamily\bfseries} 
    {}                   
    { }                  
    {}                   
\theoremstyle{meiner}
\newtheorem{theorem}{Theorem}
\newtheorem{lem}{Lemma}
\newtheorem{prop}{Proposition}
\newtheorem{corollary}{Corollary}
\newtheorem{obs}[theorem]{Observation}
\newtheorem{rem}[theorem]{Remark}
\newtheorem{fact}[theorem]{Fact}
\newcommand{\supp}[1]{{\rm{supp}}(#1)}
\newcommand{\sign}[1]{{\rm{sign}}(#1)}
\newcommand{\signvec}[1]{{\overrightarrow{\chi}}(#1)}
\newcommand{\col}[1]{{\rm{col}}(#1)}
\newcommand{\aff}[1]{{\rm{aff}}(#1)}
\newcommand{\spann}[1]{{\rm{span}}(#1)}
\def\Fact#1.{\par\sk{\NI\bf Fact~#1.}\ }
\def\Problem#1.{\par\sk{\NI\bf Problem~#1.}\ }
\def\Claim#1.{\medbreak\ni{\bf Claim~#1.}\ }
\def\Case#1.{\medbreak\ni{\bf Case~#1.}\ }
\def\SubCase#1.{\medbreak\ni{\bf Subcase~#1.}\ }
\def\Remark#1.{\par\sk{\NI\sffamily\bfseries Remark~#1}\ }
\def\Proof{\ni{\sl Proof.}\kern2pt\ }
\definecolor{RED}{rgb}{.84,0,0}
\definecolor{BLUE}{rgb}{0,0,.75}
\begin{document}
%

\title{\vbox{}\vskip-25mm\vbox{}
       {\bf\LARGE Distributive Lattices, Polyhedra,\\
        and Generalized Flow}}

\author{
      \large Stefan Felsner \& Kolja B.~Knauer\\[+1mm]
      \sf Institut f\"ur Mathematik,\\
      \sf Technische Universit\"at Berlin.\\
      \sf {\tt \{felsner,knauer\}@math.tu-berlin.de}
}
\date{}
\maketitle

\begin{abstract}
  A D-polyhedron is a polyhedron $P$ such that if $x,y$ are in $P$ then
  so are their componentwise max and min. In other words, the
  point set of a D-polyhedron forms a distributive lattice with the
  dominance order. We provide a full characterization of the bounding 
  hyperplanes of D-polyhedra.

  Aside from being a nice combination of geometric and order
  theoretic concepts, D-polyhedra are a unifying generalization of
  several distributive lattices which arise from graphs.  In fact every
  D-polyhedron corresponds to a directed graph with arc-parameters,
  such that every point in the polyhedron corresponds to a vertex
  potential on the graph. Alternatively, an edge-based description of
  the point set can be given. The objects in this model are dual to
  generalized flows, i.e., dual to flows with gains and losses.

  These models can be specialized to yield some cases of distributive
  lattices that have been studied previously.  Particular
  specializations are: lattices of flows of planar digraphs (Khuller,
  Naor and Klein), of $\alpha$-orientations of planar graphs
  (Felsner), of c-orientations (Propp) and of $\Delta$-bonds of
  digraphs (Felsner and Knauer). As an additional application we
  exhibit a distributive lattice structure on generalized flow of
  breakeven planar digraphs.
\end{abstract}

\section{Introduction}
\label{sec:intro}
A polyhedron $P\subseteq \mathbb{R}^{n}$ is called
\term{distributive} if 
\begin{center}
$x,y\in P \Longrightarrow
\min(x,y),\max(x,y)\in P$
\end{center}
where minimum and maximum are taken componentwise. Distributive polyhedra are 
abbreviated \term{D-polyhedra}.

   \calc_figscale{45}
    \begin{figure}[htb]
    \centerline{\input{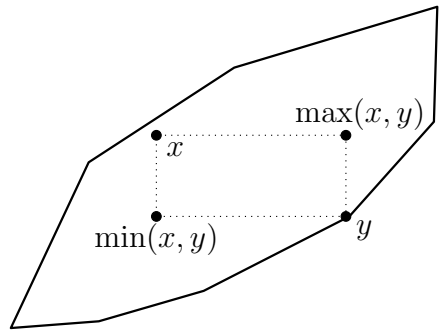}}
    \caption{A distributive polytope in $\mathbb{R}^{2}$.\label{fig:dpolypic}}
    \end{figure}
    

Denote by $\leq_{\text{dom}}$ the \term{dominance order} on $\mathbb{R}^{n}$, i.e.,
$$ 
x\leq_{\text{dom}}y \quad\Longleftrightarrow\quad
 x_{i}\leq y_{i} \quad \text{for all } {1\leq i\leq n}.
$$
The dominance order is a distributive lattice on  $\mathbb{R}^{n}$. Join and meet 
in the lattice are given by the componentwise $\max$ and $\min$. 
\begin{fact}\label{fact:dist}
  A subset $S\subseteq \mathbb{R}^{n}$ is a distributive lattice with
  respect to $x\leq_{\text{dom}}y$ if and only if it is closed with
  respect to $\max$ and $\min$.
\end{fact}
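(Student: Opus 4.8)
The plan is to pin down the relationship between the componentwise operations and the order-theoretic join and meet, and then read off both implications from it. The single observation that drives everything is this: for any $x,y\in\mathbb{R}^{n}$, the point $\max(x,y)$ is precisely the join $x\vee y$ and $\min(x,y)$ is precisely the meet $x\wedge y$ in the lattice $(\mathbb{R}^{n},\leq_{\text{dom}})$. Indeed, $\max(x,y)$ dominates both $x$ and $y$, so it is a common upper bound; and if $z$ is any common upper bound then $z_{i}\geq x_{i}$ and $z_{i}\geq y_{i}$, hence $z_{i}\geq\max(x_{i},y_{i})$ for every coordinate $i$, so $z\geq_{\text{dom}}\max(x,y)$. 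Thus $\max(x,y)$ is the least upper bound, and dually $\min(x,y)$ is the greatest lower bound.

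For the direction assuming closure, suppose $S$ is closed under $\max$ and $\min$. Then for all $x,y\in S$ the global join $\max(x,y)$ and global meet $\min(x,y)$ already lie in $S$; being the least upper bound and greatest lower bound in all of $\mathbb{R}^{n}$, they are a fortiori the least upper bound and greatest lower bound within $S$. Hence $(S,\leq_{\text{dom}})$ is a lattice whose operations are the restrictions of those of $\mathbb{R}^{n}$. Distributivity is then inherited: the law $x\wedge(y\vee z)=(x\wedge y)\vee(x\wedge z)$ holds in $\mathbb{R}^{n}$ because it holds coordinatewise in the chain $\mathbb{R}$, where $\max,\min$ trivially distribute, and since the operations on $S$ agree coordinatewise with those on $\mathbb{R}^{n}$ the same identity holds on $S$. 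So $S$ is a distributive lattice.

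The converse is where I would be most careful, because being a lattice for the induced order is a priori weaker than being closed under $\max,\min$: a sub-poset can possess joins and meets of its own that overshoot the global $\max,\min$. I read the statement, consistently with the preceding sentence identifying $\vee,\wedge$ with $\max,\min$, as asserting that $S$ is a \emph{sublattice} of $(\mathbb{R}^{n},\leq_{\text{dom}})$, i.e. that its join and meet are the induced componentwise operations. Under this reading the converse is immediate: if $x\vee y=\max(x,y)$ and $x\wedge y=\min(x,y)$ are the operations in $S$ and lie in $S$ for all $x,y\in S$, then $S$ is by definition closed under $\max$ and $\min$. I would make this interpretation explicit, since otherwise the implication genuinely fails — for instance $\{(0,0),(1,0),(0,1),(2,2)\}\subseteq\mathbb{R}^{2}$ is order-isomorphic to the distributive lattice $B_{2}$ yet is not closed under $\max$, as $\max\bigl((1,0),(0,1)\bigr)=(1,1)$ is missing. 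Thus the crux is not a hard computation but the correct identification of the lattice operations with $\max$ and $\min$; once that is fixed, both directions follow from the least-upper-bound/greatest-lower-bound observation above.
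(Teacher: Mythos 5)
Your proof is correct, and it supplies more than the paper does: the paper states this as a bare Fact with no proof at all, relying implicitly on the preceding sentence that join and meet in $(\mathbb{R}^{n},\leq_{\text{dom}})$ are the componentwise $\max$ and $\min$. Your closure-implies-lattice direction (the global join and meet lie in $S$, hence serve as join and meet within $S$, and the distributive identity is inherited coordinatewise from the chain $\mathbb{R}$) is exactly the argument the paper leaves tacit. More importantly, your caution about the converse is warranted rather than pedantic: read literally (``the order induced on $S$ makes it a distributive lattice''), the statement is false, and your counterexample $\{(0,0),(1,0),(0,1),(2,2)\}$ --- order-isomorphic to the distributive lattice $B_{2}$ yet not closed under $\max$, since $(1,1)$ is missing --- establishes this. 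The reading you adopt, namely that $S$ is a \emph{sublattice} whose operations are the componentwise $\max$ and $\min$, is indeed the one the paper intends; this is confirmed by its own later usage, e.g.\ in the proof of Lemma~\ref{lem:distr_P}, where closure under componentwise maxima and minima is said, ``by Fact~\ref{fact:dist}'', to make $P_i$ a distributive \emph{sublattice} of the dominance order. So your explicit identification of the lattice operations with $\max$ and $\min$ is the right way to make the Fact true, and under it both directions are, as you say, essentially immediate.
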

It is a fact from order theory that every \textit{finite} distributive
lattice can be represented as a subset $S\subseteq \mathbb{Z}^{n}$
with the dominance order, see e.g.~\cite{Dav-91}. The name
\textit{distributive} polyhedron is justified by the following:
\begin{obs}
  A polyhedron $P\subseteq \mathbb{R}^{n}$ is a D-polyhedron if and
  only if it is a distributive lattice with respect to the dominance
  order.
\end{obs}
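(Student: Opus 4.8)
The plan is to obtain the statement as the specialization of Fact~\ref{fact:dist} to the point set of a polyhedron. By the definition given above, a polyhedron $P$ is a D-polyhedron exactly when it is closed under the componentwise operations $\max$ and $\min$. Applying Fact~\ref{fact:dist} with $S=P$, closure under $\max$ and $\min$ is in turn equivalent to $(P,\leq_{\text{dom}})$ being a distributive lattice. Chaining the two equivalences proves the Observation; since both the definition of D-polyhedron and the hypothesis of the Observation already assume $P$ to be a polyhedron, nothing has to be checked on the geometric side, and the entire argument is order-theoretic.

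It is instructive to separate the two directions. For ``D-polyhedron $\Rightarrow$ distributive lattice'' I would note that $(\mathbb{R}^{n},\leq_{\text{dom}})$ is itself a distributive lattice whose join and meet are the componentwise $\max$ and $\min$. A D-polyhedron is, by definition, closed under these two operations, hence is a sublattice of $(\mathbb{R}^{n},\leq_{\text{dom}})$, and a sublattice of a distributive lattice is again distributive. This is the half of Fact~\ref{fact:dist} that underlies the phrasing in the abstract.

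The converse, ``distributive lattice $\Rightarrow$ D-polyhedron,'' carries the one genuinely delicate point, and it is the step I would treat most carefully. Knowing merely that $(P,\leq_{\text{dom}})$ is a lattice guarantees that any two $x,y\in P$ have a least upper bound and a greatest lower bound inside $P$, but not a priori that these coincide with the componentwise $\max(x,y)$ and $\min(x,y)$; for a general subset of $\mathbb{R}^{n}$ they need not, as a four-element ``diamond'' already shows. What makes the implication go through is precisely that, with respect to the dominance order, the lattice operations are forced to be $\max$ and $\min$, so that $P$ is closed under them and hence a D-polyhedron. I would therefore regard verifying this coincidence of the abstract lattice operations with the componentwise $\max$ and $\min$ as the crux of the matter. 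One inequality is free, since the least upper bound $x\vee y$ dominates both $x$ and $y$ and hence $\max(x,y)$; the reverse containment, equivalently the assertion that $\max(x,y)\in P$ so that it can itself serve as the join, is exactly what Fact~\ref{fact:dist} supplies, and once that fact is invoked the Observation follows at once.
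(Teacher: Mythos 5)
Your first paragraph is exactly the paper's own treatment: the Observation appears there with no separate proof, because it is the combination of the definition of a D-polyhedron (closure under componentwise $\max$ and $\min$) with Fact~\ref{fact:dist}, where ``distributive lattice with respect to $\leq_{\text{dom}}$'' is implicitly understood in the \emph{sublattice} sense, i.e., join and meet are the operations inherited from $(\mathbb{R}^n,\leq_{\text{dom}})$, namely componentwise $\max$ and $\min$. Up to and including your second paragraph you match the paper.

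The gap is in your third paragraph, at precisely the step you single out as the crux, and it is a circularity. You correctly note that for an arbitrary subset of $\mathbb{R}^n$ the induced dominance order can be a distributive lattice whose abstract join and meet are \emph{not} $\max$ and $\min$: a four-point diamond such as $\{(0,0),(1,0),(0,1),(2,2)\}$, where $(1,0)\vee(0,1)=(2,2)\neq(1,1)=\max\bigl((1,0),(0,1)\bigr)$, is Boolean, hence distributive, but not $\max$-closed. That very example shows that the ``only if'' direction of Fact~\ref{fact:dist}, read with abstract lattice operations, is \emph{false} for general subsets $S$. Consequently Fact~\ref{fact:dist} cannot be what ``supplies'' the assertion $\max(x,y)\in P$: invoking it there applies to $P$ exactly the implication your diamond refutes. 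There are only two coherent readings, and your argument works in neither. If, as in the paper, the hypothesis means ``distributive sublattice,'' then it already says $P$ is closed under $\max$ and $\min$, and your crux is vacuous; the Observation is a definitional restatement. If instead you mean an abstract lattice in the induced order, then the converse has real content and any correct proof must use that $P$ is a polyhedron (convex) -- something your argument never does. Moreover, convexity alone would not suffice: the simplex $\mathrm{conv}\{(0,0,0),(1,0,0),(0,1,0),(1,1,1)\}$ is a polytope that is a lattice under dominance (the unique upper bound, hence join, of $(1,0,0)$ and $(0,1,0)$ is $(1,1,1)$), yet it is not closed under $\max$; it fails to contradict the Observation only because it is not distributive (it contains a pentagon sublattice, e.g.\ $\{(0.5,0,0),(1,0,0),(0.5,0.5,0),(0.5,0.5,0.5),(1,0.5,0.5)\}$). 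So in this reading the missing proof would have to combine convexity with distributivity in an essential way, and no such step appears in your proposal.
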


In Section~\ref{sec:geom} we will prove a characterization of
D-polyhedra in terms of their description as an intersection of
halfspaces. In particular we obtain distributivity for known classes
of polytopes,~e.g. \textit{order-polytopes} and more generally
\textit{polytropes}~\cite{Jos-08}, also called \textit{alcoved
  polytopes}~\cite{Lam-06}.

In Section~\ref{sec:comb} we use the geometric characterization of
D-polyhedra to provide a combinatorial description in terms of
vertex-potentials of arc-parameterized digraphs. This is
illustrated by a description of $\Delta$-bonds as integral points of
D-polyhedra in Subsection~\ref{subsec:bonds}. As was shown
in~\cite{Fel-08}, the distributive lattice on $\Delta$-bonds
generalizes distributive lattices on flows of planar
digraphs~\cite{Khu-93}, $\alpha$-orientations of planar
graphs~\cite{Fel-04}, and c-orientations of graphs~\cite{Pro-93}. Here
we additionally suggest to view these objects as integral points in
polyhedra with integral vertices.

In Subsection~\ref{subsec:genp} we give a combinatorial description of the objects in
the arc-space of a parameterized digraph which carry a distributive
lattice structure, coming from a D-polyhedron.

Section~\ref{sec:plan} contains a new application of the theory.
We prove a distributive lattice
structure on a class of generalized flow of planar digraphs.

We conclude in Section~\ref{sec:conco} with final remarks and open problems.

\section{Application}\label{sec:app}

In~\cite{Fel-08} we introduced the set
$\mathcal{B}_{\Delta}(D,c_{\ell},c_{u})$ of (integral) $\Delta$-bonds
of a directed graph. The data is a directed multi-graph $D=(V,A)$ with
upper and lower integral arc-capacities $c_{u},c_{\ell}: A \to
\mathbb{Z}$ and a number $\Delta_C$ for each cycle $C\in\mathcal{C}$. Here a cycle
is understood as a cycle in the underlying undirected graph together
with one of its two cyclic orientations. For a map $x: A \to
\mathbb{Z}$ and $C\in\mathcal{C}$ denote by 
$$
\delta(C,x):=\sum_{a\in
  C^{+}}x(a)-\sum_{a\in C^{-}}x(a)
$$
the \term{circular balance}\footnote{
in previous work on the topic this term was sometimes called the
{\it circular flow difference}. Since bonds are not flows but orthogonal to flows
that name may cause confusion.} 
of $x$ around $C$.
A map $x: A \to \mathbb{Z}$ is a \term{$\Delta$-bond} if

\Item(B$_1$) \quad $c_{\ell}(a)\leq x(a)\leq c_{u}(a)$ for all $a\in A$.
             \hfill (capacity constraints)

\Item(B$_2$) \quad $\Delta_C = \delta(C,x)$ for all $C\in\mathcal{C}$.
             \hfill (circular $\Delta$-balance conditions)

\medskip

\ni 
In~\cite{Fel-08} we showed that $\mathcal{B}_{\Delta}(D,c_{\ell},c_{u})$ 
carries the structure of a distributive lattice.

Below we sketch the crucial observations that lead us to the notion of
D-polyhedra. In this section we will only consider connected
digraphs, i.e., digraphs whose underlying graph is connected. Later we
will see that this can be assumed without loss of
generality. 

The first lemma says that if we allow a change of
arc-capacities we can assume $\Delta=\mathbf{0}$.

\begin{lem}\label{lem:trans}
  For every $D,c_{\ell},c_{u},\Delta$ there are $c'_{\ell},c'_{u}$
  such that
  $\mathcal{B}_{\Delta}(D,c_{\ell},c_{u})\cong\mathcal{B}_{\mathbf{0}}(D,c'_{\ell},c'_{u})$
\end{lem}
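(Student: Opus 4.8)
The plan is to realize the passage from $\Delta$ to $\mathbf{0}$ by a single \emph{translation} of the ambient space $\mathbb{Z}^{A}$. Concretely, I would look for one map $y\colon A\to\mathbb{Z}$ whose circular balance matches the prescribed data, i.e.\ $\delta(C,y)=\Delta_C$ for every $C\in\mathcal{C}$. Given such a $y$, set $c'_{\ell}:=c_{\ell}-y$ and $c'_{u}:=c_{u}-y$ and consider the map $\phi(x):=x-y$. Because $\delta(C,\cdot)$ is linear in its second argument, $\delta(C,x-y)=\delta(C,x)-\delta(C,y)=\delta(C,x)-\Delta_C$, so $x$ satisfies (B$_2$) for $\Delta$ exactly when $x-y$ satisfies (B$_2$) for $\mathbf{0}$, while (B$_1$) transforms directly into $c'_{\ell}\le x-y\le c'_{u}$. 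Hence $\phi$ is a bijection $\mathcal{B}_\Delta(D,c_{\ell},c_{u})\to\mathcal{B}_{\mathbf{0}}(D,c'_{\ell},c'_{u})$. As a translation it preserves $\le_{\text{dom}}$ and the operations $\min,\max$, so it is an isomorphism of the two distributive lattices (Fact~\ref{fact:dist}).

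It remains to construct $y$, and here I would exploit the connectivity of $D$. Choose a spanning tree $T$ of the underlying graph and put $y(a):=0$ on every tree arc. Each non-tree arc $a$ closes a unique fundamental cycle $C_a$, consisting of $a$ together with the $T$-path joining its endpoints; orient $C_a$ so that $a\in C_a^{+}$. Since $y$ vanishes on all tree arcs, $\delta(C_a,y)=y(a)$, so setting $y(a):=\Delta_{C_a}$ makes the circular balance of $y$ agree with $\Delta$ on the entire fundamental cycle basis.

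Finally I would lift this agreement from the basis to all of $\mathcal{C}$. The key point is that $C\mapsto\delta(C,x)$ is additive under $\mathbb{Z}$-linear combination of cycles in the cycle space (arcs traversed in opposite directions cancel in the signed sum), and the fundamental cycles $\{C_a\}$ form a $\mathbb{Z}$-basis of that space. Hence, provided the data $\Delta$ is itself additive over the cycle space, agreement on the basis forces $\delta(C,y)=\Delta_C$ for every $C\in\mathcal{C}$, completing the construction. Verifying this consistency of $\Delta$ is the main obstacle: it is precisely the condition under which $\mathcal{B}_\Delta(D,c_{\ell},c_{u})$ can be nonempty, so if it fails there is nothing to prove—one simply chooses infeasible capacities (say $c'_{\ell}(a)>c'_{u}(a)$ for one arc) so that $\mathcal{B}_{\mathbf{0}}(D,c'_{\ell},c'_{u})$ is empty as well, and the two empty lattices are trivially isomorphic.
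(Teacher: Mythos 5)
Your proof is correct and takes essentially the same route as the paper: the paper also fixes a spanning tree $T$ and translates by the vector that vanishes on tree arcs and equals $\Delta_{C(T,a)}$ on each non-tree arc $a$, which is exactly your $y$, with the capacities translated accordingly. You are in fact slightly more careful than the paper's one-line argument, which leaves implicit both the lifting from fundamental cycles to all of $\mathcal{C}$ and the degenerate case of an inconsistent $\Delta$ (where both sides must be made empty).
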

\begin{proof}
  Fix a spanning tree $T$ of $D$. Let $f:\mathbb{Z}^{A}\to
  \mathbb{Z}^{A}$ be defined as follows: $f(z)_a:=z_a$ if $a$ is an
  arc of $T$ and $f(z)_a:=z_a-\Delta_{C(T,a)}$ otherwise. Here
  $C(T,a)$ denotes the fundamental cycle of $T$ induced by $a$
  with the cyclic orientation that makes $a$ a 
  forward arc.

  Applying the translation $f$ to $\Delta$-bonds and capacity
  constraints yields a bijection
  $$
  f:\mathcal{B}_{\Delta}(D,c_{\ell},c_{u})\to
        \mathcal{B}_{\mathbf{0}}(D,f(c_{\ell}),f(c_{u})).
  $$
\vskip-8mm
\end{proof}

For elements of $\mathcal{B}_{\mathbf{0}}(D,c_{\ell},c_{u})$ we drop the 
reference to $\Delta = \mathbf{0}$ and simply call them
(integral) \nct{bonds}. 

For a digraph $D$ identify $V_D$ with
$[n]$. Define the \term{network matrix} $N\in\mathbb{R}^{n\times m}$ of $D$ to consist of
columns $e_j-e_i$ for every non-loop arc $a=(i,j)$ and $e_i$ for a
loop $a=(i,i)$. Here $e_{k}$ denotes the $k$th \nct{unit vector},
which has a $1$ in the $k$th entry and is $0$ elsewhere.

\begin{lem}\label{lem:bij-bonds-pot}
  For every tuple $D, c_{\ell},c_{u}$ and $i \in V_D$ there is a bijection between
  $\mathcal{B}_{\mathbf{0}}(D,c_{\ell},c_{u})$ and $P_i:=\{p\in
  \mathbb{Z}^{V}\mid c_{\ell}\leq N^{\top}p\leq c_{u}\text{ and }
  p_i=0\}$, where
  $N$ is the network-matrix of $D$ and $i\in[n]$ is any vertex of $D$.
\end{lem}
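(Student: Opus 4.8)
The plan is to realize the bijection explicitly through the linear map $\Phi\colon\mathbb{Z}^V\to\mathbb{Z}^A$ given by $\Phi(p)=N^\top p$, which assigns to a non-loop arc $a=(k,\ell)$ the value $p_\ell-p_k$ and to a loop $a=(k,k)$ the value $p_k$; in other words $\Phi$ turns a vertex potential into an arc-map. First I would check that $\Phi$ maps $P_i$ into $\mathcal{B}_{\mathbf 0}(D,c_\ell,c_u)$. Condition (B$_1$) is literally the inequality $c_\ell\le N^\top p\le c_u$ appearing in the definition of $P_i$, so there is nothing to do there. Condition (B$_2$) holds because the circular balance of a potential difference telescopes: traversing a cycle $C$ and writing each forward arc as $p_\ell-p_k$ and each backward arc as $p_k-p_\ell$, the sum around the closed walk cancels termwise and yields $\delta(C,N^\top p)=0=\Delta_C$. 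Hence $\Phi$ is a well-defined map $P_i\to\mathcal{B}_{\mathbf 0}$.

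Injectivity is the easy half and uses connectivity only lightly. If $\Phi(p)=\Phi(p')$, then $p_\ell-p_k=p'_\ell-p'_k$ along every non-loop arc, so $p-p'$ is constant on each connected component of $D$. Since $D$ is connected and $p_i=p'_i=0$ by the normalization defining $P_i$, that constant must be $0$, whence $p=p'$.

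The heart of the proof, and the step I expect to be the main obstacle, is surjectivity: reconstructing the potential from a given bond. For $x\in\mathcal{B}_{\mathbf 0}$ I would put $p_i:=0$ and, for each other vertex $v$, pick a walk from $i$ to $v$ in the connected graph $D$ and define $p_v$ as the signed sum of $x$ along it (adding $x(a)$ for forward arcs, subtracting it for backward arcs). The crucial point is that $p_v$ is independent of the chosen walk, and this is exactly where connectivity together with (B$_2$) is needed: the difference of the signed sums along two walks from $i$ to $v$ is precisely the circular balance $\delta(C,x)$ of $x$ around the closed walk $C$ they bound, which the $\Delta$-balance conditions force to vanish. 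Once $p$ is seen to be well defined, the identity $N^\top p=x$ holds arc by arc — on non-loop arcs by the very definition of $p$, and on loops as a routine additional verification — and the hypothesis $c_\ell\le x\le c_u$ then places $p$ in $P_i$. Combining this with injectivity gives the asserted bijection.
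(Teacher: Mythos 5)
Your proposal is correct and follows essentially the same route as the paper's proof: the bijection is the map $p\mapsto N^{\top}p$, surjectivity is obtained by reconstructing $p$ from $x$ as a signed sum along walks from the base vertex $i$ (well defined precisely because $\delta(C,x)=0$ on all cycles), and injectivity follows from connectivity together with the normalization $p_i=0$. Your write-up is in fact somewhat more detailed than the paper's (e.g.\ in explicitly verifying the telescoping cancellation for condition (B$_2$)), but the ideas coincide.
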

\begin{proof}
  We prove that
  $N^T:P_i\to\mathcal{B}_{\mathbf{0}}(D,c_{\ell},c_{u})$, i.e.
  $x(a):=p_k-p_j$ for $a=(j,k)$, is a bijection. Since $D$ is
  connected we can recover any $p\in P_i$ from a given $x$ by taking
  any $(i,j)$-walk $W$ with forward and backward arc set $W^{+}$ and
  $W^{-}$, respectively. Since $\delta(C,x)=0$ for every
  $C\in\mathcal{C}$ mapping $x$ to $p(x)$ with
$$
p_j(x):=\sum_{a\in W^{+}}x(a)-\sum_{a\in W^{-}}x(a)
$$
is independent of the choice of $W$. Injectivity of $N^T$ is a consequence
of the  connectedness of $D$ and from fixing
$p_i=0$. We obtain that $N^T$
is a bijection between $P_i$ and
$\mathcal{B}_{\mathbf{0}}(D,c_{\ell},c_{u})$.
\end{proof}

\begin{lem}\label{lem:distr_P}
The set $P_i$ carries the structure of a distributive lattice.
\end{lem}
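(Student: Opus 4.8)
The plan is to reduce the claim to closure under the lattice operations and then verify that closure directly from the shape of the defining inequalities. By Fact~\ref{fact:dist} together with the Observation preceding it, a subset of $\mathbb{Z}^{V}$ is a distributive lattice under $\leq_{\text{dom}}$ precisely when it is closed under componentwise $\max$ and $\min$. So I would fix $p,q\in P_i$, set $r:=\max(p,q)$ and $s:=\min(p,q)$ taken componentwise, and check that $r$ and $s$ again satisfy the two defining conditions of $P_i$. Integrality and the normalization are immediate, since $r,s\in\mathbb{Z}^{V}$ and $r_i=\max(p_i,q_i)=0=\min(p_i,q_i)=s_i$.

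The substance is the capacity condition $c_{\ell}\leq N^{\top}p\leq c_{u}$, and the key observation is that, by the definition of the network matrix $N$, each coordinate of $N^{\top}p$ is a difference constraint in at most two variables: for a non-loop arc $a=(j,k)$ it reads $c_{\ell}(a)\leq p_k-p_j\leq c_u(a)$, and for a loop $a=(v,v)$ it reads $c_{\ell}(a)\leq p_v\leq c_u(a)$. Constraints of this form survive componentwise $\max$ and $\min$ by elementary monotonicity. For the upper bound on an arc $a=(j,k)$ one has $p_k\leq p_j+c_u(a)\leq r_j+c_u(a)$ and likewise $q_k\leq r_j+c_u(a)$, so taking the maximum of the two left-hand sides gives $r_k\leq r_j+c_u(a)$; the lower bound follows symmetrically from $p_j\leq p_k-c_\ell(a)\leq r_k-c_\ell(a)$ and the analogue for $q$. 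For $s$ one argues dually, bounding $\min(p_k,q_k)$ from above by each of $p_k,q_k$ and from below via $p_k\geq p_j+c_\ell(a)\geq s_j+c_\ell(a)$. The loop constraints are the degenerate one-variable case: the $\max$ and $\min$ of two numbers lying in the interval $[c_\ell(a),c_u(a)]$ remain in that interval.

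Since $r$ and $s$ satisfy all capacity constraints and the normalization $p_i=0$, we get $r,s\in P_i$, so $P_i$ is closed under $\max$ and $\min$ and hence a distributive lattice. I do not expect a genuine obstacle here; the only point to get right is the bookkeeping that every defining inequality couples exactly two coordinates with opposite signs (or bounds a single coordinate), which is precisely what makes the feasible box stable under componentwise $\max$ and $\min$. In this sense the present lemma is the prototypical instance of the general geometric characterization to be proved in Section~\ref{sec:geom}, and it supplies the combinatorial motivation for the notion of a D-polyhedron.
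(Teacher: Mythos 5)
Your proposal is correct and takes essentially the same route as the paper: reduce the claim via Fact~\ref{fact:dist} to closure of $P_i$ under componentwise $\max$ and $\min$, then verify the two-variable difference constraints $c_\ell(a)\leq p_k-p_j\leq c_u(a)$ arc by arc using elementary monotonicity (the paper packages this as the chain $c_\ell(a)\leq\min(p_k-p_j,p'_k-p'_j)\leq\min(p_k,p'_k)-\min(p_j,p'_j)\leq\max(p_k-p_j,p'_k-p'_j)\leq c_u(a)$, which is the same inequality you derive by bounding $r$ and $s$). A minor point in your favor: you treat loop arcs, whose constraint is the one-variable bound $c_\ell(a)\leq p_v\leq c_u(a)$ by the definition of $N$, explicitly, whereas the paper's proof states the argument only for the two-variable case.
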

\begin{proof}
 Let $p,p'\in P_i$ and let $a=(j,k)$ be and arc of $D$. Then 
\begin{eqnarray*}
	c_{\ell}(a)     &\leq&\min(p_k-p_j,p'_k-p'_j)\\
		&\leq&\min(p_k,p'_k)-\min(p_j,p'_j)\\
		&\leq&\max(p_k-p_j,p'_k-p'_j)\\
		&\leq& c_u(a)
\end{eqnarray*}

\ni
Hence, $\min(p,p') \in P_i \subset\mathbb{Z}^{n}$.  The analog holds for
$\max$, i.e., $P_i$ is closed with respect to componentwise maxima and minima,
which by Fact~\ref{fact:dist} yields that $P_i$ is a distributive sublattice of
the dominance order on $\mathbb{Z}^{n}$.
\end{proof}

We can use the distributive lattice on $P_i$ (Lemma~\ref{lem:distr_P}) and the
bijection (Lemma~\ref{lem:bij-bonds-pot}) to 
induce a distributive lattice on $\mathcal{B}_{\mathbf{0}}(D,c_{\ell},c_{u})$.
Together with Lemma~\ref{lem:trans} this yields a short proof of 

\begin{theorem}\label{thm:bond}
  The set of (integral) $\Delta$-bonds of a connected digraph $D$ within
  capacities $c_{\ell},c_{u}$ carries the structure of a distributive lattice.
\end{theorem}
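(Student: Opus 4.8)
The plan is to combine the three preceding lemmas, which have already done all the real work. The statement concerns $\mathcal{B}_{\Delta}(D,c_{\ell},c_{u})$ for arbitrary $\Delta$, so the first step is to invoke Lemma~\ref{lem:trans} to reduce to the case $\Delta=\mathbf{0}$: there exist capacities $c'_{\ell},c'_{u}$ and a bijection $f\colon\mathcal{B}_{\Delta}(D,c_{\ell},c_{u})\to\mathcal{B}_{\mathbf{0}}(D,c'_{\ell},c'_{u})$. Since a distributive lattice structure transports across any bijection, it suffices to equip $\mathcal{B}_{\mathbf{0}}(D,c'_{\ell},c'_{u})$ with such a structure.

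Next I would fix an arbitrary vertex $i\in V_D$ and apply Lemma~\ref{lem:bij-bonds-pot}, which gives a bijection $N^{\top}\colon P_i\to\mathcal{B}_{\mathbf{0}}(D,c'_{\ell},c'_{u})$ between the bonds and the potential polytope $P_i=\{p\in\mathbb{Z}^{V}\mid c'_{\ell}\leq N^{\top}p\leq c'_{u},\ p_i=0\}$. By Lemma~\ref{lem:distr_P}, the set $P_i$ is a distributive sublattice of $(\mathbb{Z}^{n},\leq_{\text{dom}})$, closed under componentwise $\min$ and $\max$. Pulling this lattice structure back through the chain of bijections $P_i\xrightarrow{N^{\top}}\mathcal{B}_{\mathbf{0}}(D,c'_{\ell},c'_{u})\xleftarrow{f}\mathcal{B}_{\Delta}(D,c_{\ell},c_{u})$ then furnishes $\mathcal{B}_{\Delta}(D,c_{\ell},c_{u})$ with a distributive lattice structure, completing the proof.

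The only genuine point to be careful about is that transporting a distributive lattice along a bijection really does yield a distributive lattice; this is immediate, since distributivity and the lattice axioms are purely equational and a bijection preserves all equations satisfied by the induced operations. There is no real obstacle here: the assumption that $D$ is connected, which is needed for the bijection of Lemma~\ref{lem:bij-bonds-pot} (injectivity of $N^{\top}$ uses connectedness together with the normalization $p_i=0$), is exactly the hypothesis of the theorem, so every lemma applies verbatim. The proof is thus a short assembly of the established components rather than a new argument.
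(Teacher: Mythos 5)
Your proposal is correct and follows exactly the paper's own argument: reduce to $\Delta=\mathbf{0}$ via Lemma~\ref{lem:trans}, then transport the distributive lattice structure of $P_i$ (Lemma~\ref{lem:distr_P}) through the bijection of Lemma~\ref{lem:bij-bonds-pot} and back through $f$. Your additional remarks---that lattice structure transports along any bijection because the axioms are equational, and that connectivity is precisely what makes Lemma~\ref{lem:bij-bonds-pot} applicable---are sound and simply make explicit what the paper leaves implicit.
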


Since $\mathcal{B}_{\Delta}(D\cup D',c_{\ell}\oplus c'_{\ell},c_{u}\oplus
c'_u)\cong
\mathcal{B}_{\Delta}(D,c_{\ell},c_{u})\times\mathcal{B}_{\Delta}(D',c'_{\ell},c'_{u})$
we can drop the assumption of connectivity of $D$ in the statement of the theorem.

It is shown in~\cite{Fel-08} how to derive
distributive lattices on flows of planar digraphs~\cite{Khu-93},
$\alpha$-orientations of planar graphs~\cite{Fel-04}, and c-orientations of
graphs~\cite{Pro-93} from Theorem~\ref{thm:bond}.

The motivation for the present paper arises from the observation that relaxing
the integrality condition in the above arguments does not destroy the
distributivity. In other words, we obtain a D-polyhedron on the feasible
vertex-potentials. Hence, the set of \textit{real-valued} $\Delta$-bonds inherits a
polyhedral \textit{and} a distributive lattice structure.

In this paper we characterize those real-valued subsets of the arc space of
parameterized digraphs, which can be proven to carry a distributive lattice
structure by the above method as \textit{generalized} $\Delta$-bonds, see
Theorem~\ref{thm:gendelta}.

We will see in Subsection~\ref{subsec:bonds} how Theorem~\ref{thm:bond} turns
out to be a corollary of our theory.
\section{Geometric Characterization}\label{sec:geom}
We want to find a geometric characterization of distributive polyhedra. As a
first ingredient we need the basic
\begin{obs}
The property of being a D-polyhedron is invariant under: 

$\bullet$\quad translation\hbox to 1cm{\hss}
$\bullet$\quad scaling\hbox to 1cm{\hss}
$\bullet$\quad intersection

\end{obs}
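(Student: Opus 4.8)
The plan is to observe that each of the three operations interacts cleanly with the componentwise operators $\min$ and $\max$, and that each operation also preserves the property of being a polyhedron. Combining these two facts settles all three cases at once. The only ingredients needed are the equivariance identities for $\min$ and $\max$ under the relevant maps, together with the elementary fact that intersections and invertible affine images of polyhedra are again polyhedra.

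I would begin with \emph{intersection}, which is purely set-theoretic. If $P$ and $Q$ are D-polyhedra and $x,y\in P\cap Q$, then $x,y\in P$ yields $\min(x,y),\max(x,y)\in P$, and likewise $\min(x,y),\max(x,y)\in Q$; hence both lie in $P\cap Q$. As an intersection of halfspaces, $P\cap Q$ is again a polyhedron, so it is a D-polyhedron. No equivariance is needed here.

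For \emph{translation} by $t\in\mathbb{R}^n$ I would use that $\min$ and $\max$ commute with adding a constant vector: coordinatewise one has $\min(u+t,v+t)=\min(u,v)+t$ and $\max(u+t,v+t)=\max(u,v)+t$. Thus if $x',y'\in P+t$, write $x'=x+t$ and $y'=y+t$ with $x,y\in P$; then $\min(x',y')=\min(x,y)+t\in P+t$ because $\min(x,y)\in P$, and symmetrically for $\max$. Since a translate of a polyhedron is a polyhedron, $P+t$ is a D-polyhedron.

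\emph{Scaling} is the one case that requires attention to signs, and it is the step I expect to be the main (though modest) obstacle. For a strictly positive factor --- a scalar $\lambda>0$, or more generally a diagonal map with strictly positive entries --- the operators are equivariant, $\min(\lambda x,\lambda y)=\lambda\min(x,y)$ and analogously for $\max$, so the argument proceeds exactly as for translation. The subtlety to record is what happens under a sign-reversing factor: a \emph{uniform} negative scalar interchanges $\min$ and $\max$ simultaneously in every coordinate, which does no harm because a D-polyhedron is closed under \emph{both} operators; by contrast, a diagonal scaling with mixed signs swaps $\min$ and $\max$ on some coordinates but not on others, producing a mixed extremum that need not lie in the image and can genuinely destroy the D-property. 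Hence ``scaling'' here is to be read as scaling by positive factors, and it is exactly this positivity that guarantees the equivariance identity. In each admissible case the image is again a polyhedron, completing the verification.
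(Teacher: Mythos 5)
Your proof is correct. The paper states this observation without any proof at all, and your verification---componentwise equivariance of $\min$ and $\max$ under translation and positive scaling, plus the purely set-theoretic argument for intersection---is exactly the routine argument the authors leave implicit. Your caveat about signs is a worthwhile precision rather than a digression: the paper's only substantive use of scaling, in Lemma~\ref{lem:aff}, dilates $P$ about a point so that two given points of $\aff{P}$ land inside, i.e., a translation composed with a positive scalar dilation, which falls squarely in your admissible cases; and your warning about mixed signs is genuinely needed, since e.g.\ applying $\mathrm{diag}(1,-1)$ to the segment from $(0,0)$ to $(1,1)$ (a D-polyhedron) yields the segment from $(0,0)$ to $(1,-1)$, which is not closed under $\min$ and $\max$.
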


\ni In order to give a description of D-polyhedra in terms of bounding
halfspaces we will pursue the following strategy. We start by characterizing
distributive affine subspaces of~$\mathbb{R}^{n}$. Then we provide a
characterization of the orthogonal complements of distributive affine spaces.
Finally we show that D-polyhedra are exactly those polyhedra that have a
representation as the intersection of distributive halfspaces.

\subsection{Distributive Affine Space}

For a vector $x\in\mathbb{R}^{n}$ let $\supp{x}:=\{i\in [d]\mid x_{i}\neq 0\}$
be its \nct{support}. Moreover denote by $x^+:=\max(\mathbf{0},x)$ and
$x^-:=\min(\mathbf{0},x)$. Call a set of vectors $B\subseteq \mathbb{R}^{n}$
\term{NND (non-negative disjoint)} if the elements of $B$ are non-negative and
have pairwise disjoint supports. Note that a NND set of non-zero vectors is
linearly independent.

\begin{lem}\label{lem:bas}
  Let $I\cup\{x\}\subset \mathbb{R}^n$ be linearly independent, then
  $I\cup\{x^+\}$ or $I\cup\{x^-\}$ is linearly independent.
\end{lem}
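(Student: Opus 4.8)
The plan is to argue by contradiction, exploiting the single algebraic identity $x = x^+ + x^-$, which holds componentwise straight from the definitions of $x^+=\max(\mathbf{0},x)$ and $x^-=\min(\mathbf{0},x)$. First I would record that since $I\cup\{x\}$ is linearly independent, its subset $I$ is linearly independent as well, and moreover $x\notin\spann{I}$.

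Now suppose, for contradiction, that \emph{both} $I\cup\{x^+\}$ and $I\cup\{x^-\}$ are linearly dependent. Consider the dependence of $I\cup\{x^+\}$: any nontrivial linear relation among these vectors must carry a nonzero coefficient on $x^+$, since otherwise it would reduce to a nontrivial relation among the vectors of $I$ alone, contradicting the independence of $I$. Solving that relation for $x^+$ then shows $x^+\in\spann{I}$. The identical argument applied to $I\cup\{x^-\}$ gives $x^-\in\spann{I}$.

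Adding these two memberships and using $x=x^++x^-$ yields $x\in\spann{I}$, contradicting $x\notin\spann{I}$. Hence at least one of $I\cup\{x^+\}$ and $I\cup\{x^-\}$ must be linearly independent, which is the assertion.

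I expect no serious obstacle; the only point requiring care is the reduction ``$I\cup\{x^{\pm}\}$ dependent $\Rightarrow x^{\pm}\in\spann{I}$'', which rests entirely on the independence of $I$. It is worth remarking that this argument uses nothing about the pair $x^+,x^-$ beyond the splitting $x=x^++x^-$; the disjoint-support (NND) structure plays no role in \emph{this} step, but is precisely what makes the lemma useful for iteratively constructing NND bases in the subsequent development.
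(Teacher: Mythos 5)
Your proof is correct and follows essentially the same route as the paper's: assume both $x^+$ and $x^-$ lie in $\spann{I}$ and derive the contradiction $x=x^++x^-\in\spann{I}$. The only difference is that you explicitly justify the reduction from ``$I\cup\{x^{\pm}\}$ dependent'' to ``$x^{\pm}\in\spann{I}$'' via the independence of $I$, a step the paper leaves implicit.
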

\begin{proof}
  Suppose there are linear combinations $x^+ = \sum_{b\in I}\mu_b b$
  and $x^- = \sum_{b\in I}\nu_b b$. Hence $x = \sum_{b\in I}(\mu_b+\nu_b)x_b$,
  which proves that $I\cup\{x\}$ is linearly dependent --
  contradiction.
\end{proof}

\begin{prop}\label{pro:bas}
  An affine subspace $A\subseteq \mathbb{R}^{n}$ is a D-polyhedron if and only
  if it has a non-negative disjoint basis $B$.
\end{prop}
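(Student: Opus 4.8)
The plan is to treat the routine implication quickly and to concentrate the work on the converse. For the ``if'' direction, suppose $A = v + \spann{B}$ for some $v\in A$ and some NND set $B=\{b_1,\dots,b_k\}$. Given $x = v + \sum_i \lambda_i b_i$ and $y = v + \sum_i \mu_i b_i$ in $A$, I would simply verify coordinatewise that $\min(x,y) = v + \sum_i \min(\lambda_i,\mu_i)\,b_i$ and likewise for $\max$. The point is that disjointness of supports makes the comparison of $x$ and $y$ consistent within each block $\supp{b_j}$: on a coordinate $c\in\supp{b_j}$ one has $(b_j)_c>0$, so $\min(x_c,y_c)=v_c+\min(\lambda_j,\mu_j)(b_j)_c$, while on coordinates outside all supports $x$ and $y$ both agree with $v$. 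Hence $A$ is closed under $\min$ and $\max$, and being an affine subspace it is a polyhedron; by Fact~\ref{fact:dist} it is a D-polyhedron.

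For the ``only if'' direction, using invariance under translation I would first assume $\mathbf{0}\in A$, so that $A$ is a linear subspace $W$ closed under $\min$ and $\max$; since $\mathbf{0}\in W$, it is in particular closed under taking the parts $x^+$ and $x^-$. The candidate basis I would extract consists of the non-negative vectors of inclusion-minimal support: let $\mathcal{S}$ be the family of supports of non-zero non-negative vectors of $W$, let $\mathcal{M}$ be its minimal members, and pick one witness $b_S\geq\mathbf{0}$ with $\supp{b_S}=S$ for each $S\in\mathcal{M}$.

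Two claims then finish the proof, and I expect the spanning claim to be the real obstacle. First, distinct members of $\mathcal{M}$ are disjoint: if $S,T\in\mathcal{M}$ met, then $\min(b_S,b_T)\in W$ would be non-negative with support exactly $S\cap T$, forcing $S\cap T=S$ and $S\cap T=T$ by minimality, hence $S=T$; this already shows $\{b_S\}$ is NND and therefore linearly independent. Second, the $b_S$ span $W$. Since every $x\in W$ splits as $x=x^+-(-x^-)$ with both summands non-negative and in $W$, it suffices to write each non-negative $v\in W$ as a combination of the $b_S$, which I would do by induction on $|\supp v|$: choose $S\in\mathcal{M}$ with $S\subseteq\supp v$ and subtract the largest multiple $\lambda b_S$ keeping $v-\lambda b_S\geq\mathbf{0}$, where $\lambda=\min_{c\in S} v_c/(b_S)_c$; the minimizing coordinate drops out, so $\supp{v-\lambda b_S}$ is strictly smaller and induction applies. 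The base case, where $\supp v$ is already minimal, uses the same min-ratio subtraction to show that any non-negative vector with support inside a minimal set $S$ must be a multiple of $b_S$, for otherwise one produces a non-negative vector of strictly smaller support, contradicting minimality. This yields that $\{b_S : S\in\mathcal{M}\}$ is an NND basis of $W$, and translating back by $v$ gives the desired NND basis of $A$. (Lemma~\ref{lem:bas} offers an alternative route to a merely non-negative basis by replacing each basis vector $x$ with $x^+$ or $-x^-$, but the delicate step is arranging \emph{disjoint} supports, which is exactly what the minimal-support selection accomplishes.)
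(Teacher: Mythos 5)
Your proof is correct, and your ``if'' direction coincides with the paper's coordinatewise verification; your ``only if'' direction, however, takes a genuinely different route. The paper proceeds by greedy extension and contradiction: it maintains an NND set $I$ of support-minimal vectors and shows that, as long as $I$ is not a basis, a support-minimal $x$ with a positive entry and linearly independent of $I$ can be adjoined while keeping the set NND; the tools are Lemma~\ref{lem:bas} (one of $I\cup\{x^+\}$, $I\cup\{x^-\}$ stays independent) together with a ``maximal $\mu$ with $x_j=\mu b_j$'' argument that forces disjointness from each $b\in I$. You instead name the basis outright --- one non-negative witness for each inclusion-minimal support of a non-negative vector of $W$ --- and prove the two needed facts directly: disjointness is a one-line consequence of min-closure ($\min(b_S,b_T)$ would lie in $W$ with support exactly $S\cap T$, contradicting minimality), and spanning follows from the conformal decomposition $x=x^{+}+x^{-}$ combined with an explicit min-ratio peeling induction on $|\supp{v}|$. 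Your route is self-contained (Lemma~\ref{lem:bas} is never needed), constructive (the peeling is literally an algorithm for expressing a vector in the basis), and it exhibits the basis canonically as the scaled support-minimal non-negative vectors of $W$, which makes the paper's subsequent proposition on uniqueness of NND bases nearly immediate; the paper's version buys a shorter spanning step (a dimension/extension count replaces the peeling) at the price of the auxiliary lemma and a two-case analysis. The steps you leave implicit are routine and harmless: that every support of a non-zero non-negative vector of $W$ contains an inclusion-minimal one (finiteness of $[n]$), and that $x^{+},x^{-}\in W$ precisely because $\mathbf{0}\in W$ after translating.
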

\begin{proof} Since the properties involved are invariant under translation,
  we show the statement for linear subspaces only.

  ``$\Longleftarrow$'': Let $x,y\in A$ and $x=\sum_{b\in B}\mu_{b}b$ and
  $y=\sum_{b\in B}\nu_{b}b$ their representations with respect to a NND basis
  $B$ of $A$. Since the supports of vectors in $B$ are disjoint $x_{i}< y_{i}$
  is equivalent to $x_{i}=\mu_b b_{i}< \nu_b b_{i}=y_{i}$ for the unique $b\in
  B$ with $i\in \supp{b}$. Since every $b\in B$ is non-negative this is
  equivalent to $\mu_b < \nu_b$. This implies
  $\max(x_{i},y_{i})=\max(\mu_b,\nu_b)b_{i}$. In vectors this reads:
$$
\max(x,y)=\max(\sum_{b\in B}\mu_{b}b,\sum_{b\in B}\nu_{b}b)=\sum_{b\in B}\max(\mu_{b},\nu_{b})b.
$$
The analog holds for minima, hence $x,y\in
A\Longrightarrow\max(x,y),\min(x,y)\in A$, i.e., $A$ is distributive.

``$\Longrightarrow$'': Let $A$ be distributive and $I\subset A$ a NND set of
support-minimal non-zero vectors. If $I$ is not a basis of $A$ there is $x\in
A$ with: {\Item(1) $I\cup\{x\}$ is linearly independent.
\Item(2) $\exists_{i\in [n]}:x_i>0$.
\Item(3) $\supp{x}$ is minimal among the vectors with (1) and (2).}

\bigskip

\ni Claim: $I\cup\{x\}$ is NND. 

If $x$ is not non-negative then $x^+$ and $-x^-$ are non-negative and have
smaller support than~$x$. By Lemma~\ref{lem:bas} one of $I\cup\{x^+\}$ and
$I\cup\{-x^-\}$ is linearly independent -- a contradiction to the
support-minimality of $x$.

If there is $b\in I$ such that $\supp{x}\cap\supp{b}\neq\emptyset$ choose a
maximal $\mu\in\mathbb{R}$ such that for some coordinate $j$ we have
$x_j=\mu b_j$. We distinguish two cases.

If $\supp{x}\subseteq\supp{b}$ then $\emptyset\neq\supp{\mu b-x}\subsetneq
\supp{b}$ contradicts the support-minimality in the choice of $b\in I$.

If $\supp{x}\nsubseteq\supp{b}$ then since $I\cup\{\mu b-x\}$ is linearly
independent one of $I\cup\{(\mu b-x)^+\}$ and $I\cup\{(x-\mu b)^-\}$ is
linearly independent by Lemma~\ref{lem:bas}. This contradicts
support-minimality in the choice of $x$.
\end{proof}

\begin{prop}
 An NND basis is unique up to scaling.
\end{prop}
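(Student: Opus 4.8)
The plan is to show that both the partition of the coordinate set induced by the supports of the basis vectors and, on each support block, the direction of the corresponding basis vector are already determined by $A$ itself, independently of the chosen NND basis. As in the previous proof, since the properties involved are translation invariant, it suffices to treat the case of a linear subspace $A$. The first, easy, ingredient is to identify the union of supports as an invariant of $A$: set $S:=\{i\in[n]\mid x_i\neq 0 \text{ for some } x\in A\}$. Writing $x=\sum_{b\in B}\mu_b b\in A$, disjointness of supports gives $x_i=\mu_b b_i$ for the unique $b\in B$ with $i\in\supp{b}$; hence $x_i\neq0$ forces $b_i\neq 0$, while conversely the vector $b\in A$ witnesses $i\in S$ for every $i\in\supp{b}$. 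Thus $S=\bigsqcup_{b\in B}\supp{b}$ for every NND basis $B$.

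Next I would recover the support partition of $S$ from $A$ alone by means of the relation $i\approx j$ defined on $S$ to hold when $x_i=\lambda x_j$ for all $x\in A$ and some $\lambda\in\RR$. Using $x_i=\mu_b b_i$ one checks that whenever $i,j$ lie in the same block $\supp{b}$ one has $x_i=(b_i/b_j)\,x_j$ with the ratio $b_i/b_j>0$ fixed, so $i\approx j$; and that two coordinates in distinct blocks are separated, since evaluating at the basis vectors themselves produces elements of $A$ whose coordinates vanish on one block while being positive on the other, ruling out any proportionality. Because every $i\in S$ admits some $x\in A$ with $x_i\neq 0$, the constant $\lambda$ is forced to be nonzero, which makes $\approx$ symmetric and transitive, hence a genuine equivalence relation. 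Consequently the support blocks of any NND basis coincide with the $\approx$-classes, a partition intrinsic to $A$; in particular any two NND bases $B,B'$ share the same collection of supports.

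Finally, fixing a block $T$, I would observe that any $x\in A$ with $\supp{x}\subseteq T$ must have all basis coefficients $\mu_{b'}$ with $b'\neq b$ (where $\supp{b}=T$) equal to zero, so that $\{x\in A\mid \supp{x}\subseteq T\}=\spann{b}$ is one-dimensional. Hence each of $B$ and $B'$ contributes exactly one vector supported on $T$, and both lie on this common line; non-negativity of the two vectors forces the connecting scalar to be positive. This matches the two bases vector-by-vector up to positive scaling, which is the assertion.

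The main obstacle is the middle step, namely showing that the partition into supports is basis-independent; the remaining steps are routine linear algebra once it is in place. I expect the cleanest route to be exactly the proportionality relation $\approx$ above, and the one point genuinely needing care is verifying $\lambda\neq 0$, so that $\approx$ is symmetric and transitive — this is where the defining property of $S$ is used.
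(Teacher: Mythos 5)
Your proof is correct, but it takes a genuinely different route from the paper's. The paper's key step uses distributivity: by Proposition~\ref{pro:bas}, a space with an NND basis is closed under componentwise $\min$, so for $b\in B$, $b'\in B'$ with crossing supports the vector $\min(b,b')$ lies in $A$ yet has non-empty support strictly contained in both $\supp{b}$ and $\supp{b'}$, and such a vector cannot be expressed in either NND basis --- contradiction; the bases therefore induce the same support partition, and matching vectors block by block finishes the argument. You instead avoid distributivity and $\min/\max$ entirely: your proportionality relation $i\approx j$ (meaning $x_i=\lambda x_j$ for all $x\in A$) is intrinsic to $A$, and you show its classes coincide with the support blocks of \emph{any} NND basis, which pins down the partition by pure linear algebra. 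Both proofs share the same final step (the one-dimensionality of $\{x\in A\mid \supp{x}\subseteq T\}$, i.e., expressing one basis vector in the other basis). What your approach buys: it is self-contained and handles crossing and strictly nested supports uniformly --- the paper's $\min$ argument as written only excludes crossing supports, and silently relies on the basis-expansion argument to exclude nesting, a small gap your relation $\approx$ closes automatically; it also exhibits the partition and the lines $\spann{\{b\}}$ as invariants of $A$ recoverable without any basis. What the paper's approach buys: it is shorter and reuses the machinery of Proposition~\ref{pro:bas}, keeping the lattice-theoretic theme of the section in view.
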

\begin{proof}
  Suppose $A\subseteq \mathbb{R}^n$ has NND bases $B$ and $B'$. Suppose there
  are $b\in B$ and $b'\in B'$ such that
  $\emptyset\neq\supp{b}\cap\supp{b'}\neq\supp{b'},\supp{b}$. By
  Proposition~\ref{pro:bas} we have $\min(b,b')\in A$ but $\supp{\min(b,b')}$
  is strictly contained in the supports of $b$ and $b'$. Since $B$ and $B'$
  are NND the vector $\min(b,b')$ can neither be linearly combined by $B$ nor
  by $B'$.

  So the supports of vectors in $B$ and $B'$ induce the same partition of
  $[n]$. Since they are NND, the vectors $b\in B$ and $b'\in B'$ with
  $\supp{b}=\supp{b'}$ must be scalar multiples of each other.
\end{proof}

The next step is to define a class of network matrices
of arc-parameterized digraphs such that an
affine space $A$ is distributive if and only if there is a 
network matrix $N_{\Lambda}$ in the class such that
$A=\{p\in \mathbb{R}^{n}\mid N_{\Lambda}^{\top}p=c\}$.

We call a tuple $D_{\Lambda}=(V,A,\Lambda)$ an \term{arc-parameterized
  digraph} if $D=(V,A)$ is a directed multi-graph -- the \nct{underlying
  digraph} -- with $V=[n]$, $|A|=m$, and $\Lambda\in\mathbb{R}^{m}_{\geq
  \mathbf{0}}$ has the property that $\lambda_{a}=0$ only if $a$ is a loop.
For emphasis we repeat: All arc-weights $\lambda_a$ are non-negative.

Given an arc parameterized digraph $D_{\Lambda}$ we define its
\term{generalized network-matrix} to be the matrix
$N_{\Lambda}\in\mathbb{R}^{n\times m}$ with a column
$e_{j}-\lambda_{a}e_{i}$ for every arc $a=(i,j)$ with parameter
$\lambda_{a}$.

\begin{prop}\label{pro:ort}
  Let $A\subseteq \mathbb{R}^{n}$ be a non-empty affine subspace. 
  Then $A$ is distributive if and only if
  $A=\{p\in\mathbb{R}^{n}\mid N^{\top}_{\Lambda}p=c\}$, where
  $N_{\Lambda}$ is some generalized network-matrix. Moreover, $N_{\Lambda}$
  can be chosen such that the corresponding 
  arc parameterized digraph $D_{\Lambda}$ is a disjoint union of trees and loops.
\end{prop}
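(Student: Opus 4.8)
The plan is to prove both directions after reducing to the case that $A$ is a \emph{linear} subspace: all properties in the statement are translation invariant, and once the linear part is realized as the null space of some $N_{\Lambda}^{\top}$ one recovers the affine statement by setting $c_a := p^0_j - \lambda_a p^0_i$ for a fixed $p^0\in A$. For the implication ``network form $\Rightarrow$ distributive'' I would argue directly that the solution set is closed under componentwise $\min$ and $\max$. A non-loop arc $a=(i,j)$ imposes $p_j = c_a + \lambda_a p_i$ with $\lambda_a\geq 0$, so the assignment $p_i\mapsto p_j$ is monotone along the arc: if $p,p'\in A$ and $p_i\leq p'_i$ then $p_j\leq p'_j$. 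Hence $\min(p,p')$ reads both its $i$- and $j$-coordinates off the same one of $p,p'$, and likewise $\max(p,p')$, so each arc constraint is inherited. A loop $a=(i,i)$ gives $(1-\lambda_a)p_i=c_a$, which either pins $p_i$ to a fixed value (when $\lambda_a\neq 1$) or is vacuous (when $\lambda_a=1$, using $A\neq\emptyset$); either way it survives $\min$ and $\max$. This shows $A$ is distributive.

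For the converse I would feed the distributive $A$ into Proposition~\ref{pro:bas} to get an NND basis $B=\{b_1,\dots,b_k\}$ with pairwise disjoint supports $S_\ell:=\supp{b_\ell}$, and set $S_0:=[n]\setminus\bigcup_\ell S_\ell$. On each block $S_\ell$ the vector $b_\ell$ is strictly positive, so I would pick a tree with vertex set $S_\ell$, orient each edge as an arc $a=(i,j)$, and prescribe $\lambda_a:=(b_\ell)_j/(b_\ell)_i>0$; this forces the column $e_j-\lambda_a e_i$ to be orthogonal to $b_\ell$, and orthogonality to the remaining $b_{\ell'}$ is automatic by disjointness of supports. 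For each $i\in S_0$ I would add a loop at $i$ with $\lambda=0$, giving the column $e_i$. All these columns lie in the orthogonal complement of $A$; a leaf-peeling argument gives that the $|S_\ell|-1$ columns of each block are linearly independent, and independence across blocks and with the $S_0$-loops follows from disjoint supports. Since $\sum_\ell(|S_\ell|-1)+|S_0| = n-k$ equals the dimension of the orthogonal complement of $A$, these columns form a basis of it; hence $A=\{p\mid N_\Lambda^{\top}p=c\}$, and by construction the underlying digraph is a disjoint union of trees (one per block) and loops (one per $S_0$-vertex).

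The main obstacle I expect is precisely this constructive converse: it is not enough to find some arc-type normals of $A$, one must realize the \emph{entire} orthogonal complement by a legitimate generalized network matrix of the prescribed shape. The delicate points are that the orthogonality prescription $\lambda_a=(b_\ell)_j/(b_\ell)_i$ yields genuinely positive weights (so the arcs are admissible non-loop arcs), that the weighted tree columns are linearly independent, and that the dimension count matches so that they actually span. The forward direction, by contrast, is a short monotonicity computation requiring nothing beyond $\lambda_a\geq 0$.
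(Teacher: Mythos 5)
Your proof is correct, but it splits into two halves of different character when compared with the paper. The direction ``distributive $\Rightarrow$ generalized network form'' is essentially the paper's own argument: take an NND basis $B$ via Proposition~\ref{pro:bas}, put a directed spanning tree on each support block with weights $\lambda_{a}=b_{j}/b_{i}>0$, add zero-weight loops on the uncovered coordinates, check orthogonality to all basis vectors, observe linear independence of the tree/loop columns, and close with the dimension count $\sum_{\ell}(|S_{\ell}|-1)+|S_{0}|=n-k$; this also yields the ``moreover'' clause exactly as in the paper. Where you genuinely diverge is the direction ``generalized network form $\Rightarrow$ distributive''. The paper proves it by first deleting an arc from every cycle (noting the solution space is unchanged), so that the underlying digraph becomes a disjoint union of trees and loops, then explicitly constructing an NND basis of the solution space (entries are products of the $\lambda_{a}^{\pm 1}$ along tree walks) and invoking Proposition~\ref{pro:bas} a second time. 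You instead argue constraint by constraint: for a non-loop arc $a=(i,j)$ the coupling $p_{j}=c_{a}+\lambda_{a}p_{i}$ is monotone since $\lambda_{a}\geq 0$, so $\min(p,p')$ and $\max(p,p')$ read their $i$- and $j$-entries off a single one of $p,p'$ and hence satisfy the equation, while a loop constraint $(1-\lambda_{a})p_{i}=c_{a}$ either pins $p_{i}$ or is vacuous; distributivity follows because $A$ is an intersection of sets closed under componentwise $\min$ and $\max$. Your version is more elementary --- it needs neither the forest reduction nor a second appeal to Proposition~\ref{pro:bas} --- and it is the same mechanism the paper deploys elsewhere (Lemma~\ref{lem:distr_P}; and your monotonicity computation adapts verbatim to inequality constraints, which the paper handles separately in Lemma~\ref{lem:hal}). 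What the paper's longer route buys is constructive by-product information: an explicit NND basis of $\{p \mid N_{\Lambda}^{\top}p=\mathbf{0}\}$ in terms of multipliers along trees, which is the kind of data reused later (e.g.\ in the proof of Theorem~\ref{thm:genbond}), whereas your argument certifies closure under $\min$ and $\max$ without exhibiting that basis.
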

\begin{proof}
  Since the properties involved are invariant under translation, we can assume
  $A$ to be linear, hence $c=\mathbf{0}$.

  ``$\Longrightarrow$'': By Proposition~\ref{pro:bas} the distributive $A$ has
  a NND basis $B$. We construct an arc-parameterized digraph $D_{\Lambda}$,
  such that the columns of its generalized network-matrix $N_{\Lambda}$ form a
  basis of the orthogonal complement of $A$.

  For every $b\in B$ choose some arbitrary directed spanning tree on
  $\supp{b}$. To an arc $a=(i,j)$ with $i,j\in\supp{b}$ we associate the arc
  parameter $\lambda_{a}:=b_{j}/b_{i}>0$. For every $i\notin\bigcup_{b\in
    B}\supp{b}$ insert a loop $a=(i,i)$ with $\lambda_{a}:=0$. Collect the
  $\lambda_{a}$ of all the arcs in a vector
  $\Lambda\in\mathbb{R}^{m}_{\geq\mathbf{0}}$. The resulting arc-parameterized
  digraph $D_{\Lambda}$ is a disjoint union of loops and directed trees.

  Denote by $\col{N_{\Lambda}}$ the set of columns of $N_{\Lambda}$. If $b\in
  B$ and $z_a\in\col{N_{\Lambda}}$, then either
  $\supp{b}\cap\supp{z_a}=\emptyset$ -- this holds for all $b$ when $a=(i,i)$ is  a loop and $z_a=e_i$ -- or $\langle b,z_a\rangle=b_j-\lambda_a
  b_i=b_j-(b_{j}/b_{i})b_i=0$ for $a=(i,j)$. Therefore, $\col{N_{\Lambda}}$ is
  orthogonal to~$A$. Since the underlying digraph of $D_{\Lambda}$ consists of
  trees and loops only, $\col{N_{\Lambda}}$ is linearly independent. To
  conclude that $\col{N_{\Lambda}}$ generates $A^{\bot}$ in $\mathbb{R}^n$ we
  calculate
$$
|B|+|\col{N_{\lambda}}|=|B|+(\sum_{b\in
  B}(|\supp{b}|-1)+|[n]\backslash\bigcup_{b\in B}\supp{b}|=\sum_{b\in
  B}|\supp{b}|+n-|\bigcup_{b\in B}\supp{b}|.
$$
Since the supports in $B$ are mutually disjoint this equals $n$.

``$\Longleftarrow$'': Let $D_{\Lambda}$ be an arc parameterized
digraph such that $N^{\top}_{\Lambda}p=\mathbf{0}$ has a solution. If
$a=(i,j)$ is an arc, then $p_i-\lambda_{a}p_j = 0$, hence to know $p$
it is enough to know $p_i$ for one vertex~$i$ in each connected
component of $D_{\Lambda}$.  Therefore, the affine space of solutions
is unaffected by deleting an edge from a cycle of $D_{\Lambda}$. This
shows that there is no loss of generality in the assumption that the
underlying digraph $D$ of $D_{\Lambda}$ is a disjoint union of trees
and loops.  Under this assumption we construct a NND basis of
$\{p\in\mathbb{R}^{n}\mid N^{\top}_{\Lambda}p=\mathbf{0}\}$: For every
tree-component~$T$ of $D$ define a vector $b$ with $\supp{b}=V(T)$ as
follows: choose some $i\in V(T)$ and set $b_{i}:=1$, for any other
$j\in V(T)$ consider the $(i,j)$-walk $W$ in $T$.  Define
$b_{j}:=\Pi_{a\in W^{+}}\lambda_{a}\Pi_{a\in W^{-}}\lambda_{a}^{-1}$
where $W^{+}$ and $W^{-}$ are the sets of forward and backward arcs on
$W$. Since arc-weights are non-negative this procedure yields an NND
set $B$ set of non-zero vectors. Note that $B$ is orthogonal to
$\col{N_{\Lambda}}$ and that $\col{N_{\Lambda}}$ is a linearly
independent set with as many vectors as there are arcs in $A(D)$.
Denote by $k$ the number of tree-components of $D$. To see that $B$ is
spanning, we calculate $$|B|+|\col{N_{\Lambda}}|=k+|A(D)|=k+n-k=n.$$
Hence, $\text{span}(B) = \{p\in\mathbb{R}^{n}\mid N^{\top}_{\Lambda}p=\mathbf{0}\}$.
\end{proof}

\subsection{Distributive Polyhedra}

For a polyhedron $P$ we define $F\subseteq P$ to be a \nct{face} if there is
$A=\{p\in\mathbb{R}^{n}\mid \langle z,p\rangle= c\}$ such that $F=P\cap A$ and
$P$ is contained in the \nct{induced} \nct{halfspace}
$A^{+}:=\{p\in\mathbb{R}^{n}\mid \langle z,p\rangle\leq c\}$.

\begin{lem}\label{lem:fac}
 Faces of D-polyhedra are D-polyhedra.
\end{lem}
\begin{proof}
  Let $P$ be a D-polyhedron such that $P\subseteq A^{+}=\{p\in\mathbb{R}^{n}\mid
  \langle z,p\rangle\leq c\}$ and let $F=P\cap A$ a face. Suppose there are
  $x,y\in F$ such that $\max(x,y)\not\in F$, i.e., $\langle z,\max(x,y)\rangle<
  c$. Since $2c=\langle z,x+y\rangle=\langle z,\max(x,y)\rangle+\langle
  z,\min(x,y)\rangle$ this implies $\min(x,y)\not\in P$ -- contradiction.
\end{proof}

\begin{lem}\label{lem:aff}
 The affine hull of a D-polyhedron is distributive.
\end{lem}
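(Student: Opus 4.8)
The plan is to prove directly that $\aff{P}$ is closed under componentwise $\max$ and $\min$, which by Fact~\ref{fact:dist} is exactly what distributivity means. Assume $P\neq\emptyset$ (otherwise $\aff{P}=\emptyset$ and the statement is vacuous). Since forming the affine hull commutes with translation, and since — by the earlier observation — being closed under $\max$ and $\min$ is translation invariant, I would translate $P$ so that some fixed point of $P$ becomes the origin. Then $\aff{P}$ becomes a \emph{linear} subspace $L$, and it suffices to show that $L$ is closed under $\max$ and $\min$.

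The next step exploits that $P$ is full-dimensional inside $L=\aff{P}$, so its relative interior (taken in $L$) is non-empty; fix a point $z$ there. The key point is that relative-interior membership lets me push any direction of $L$ a little way and still stay inside $P$: a whole relative ball around $z$ lies in $P$, so for $x,y\in L$ there is $\delta>0$ with both $z+\delta x\in P$ and $z+\delta y\in P$. Because $P$ is a D-polyhedron, $\max(z+\delta x,\,z+\delta y)\in P\subseteq L$. Computing this maximum coordinatewise and using $\delta>0$ gives
$$
\max(z+\delta x,\,z+\delta y)=z+\delta\,\max(x,y),
$$
so $z+\delta\,\max(x,y)\in L$. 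As $z\in L$ and $L$ is a linear subspace, subtracting $z$ and dividing by $\delta$ yields $\max(x,y)\in L$. The argument for $\min$ is identical, and this finishes the proof.

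Once the right reduction is in place the remaining steps are routine, so there is no serious obstacle here; the only point requiring care is the choice of a \emph{relative}-interior point of $P$ together with the observation that every direction of $L=\aff{P}$ can be realized as $z+\delta x$ inside $P$ for small $\delta$. This is exactly where full-dimensionality of $P$ within its affine hull enters, and it is what upgrades closure of $P$ under $\max/\min$ to closure of the entire subspace $L$. If one prefers to avoid the translation step, one can keep $z$ in the relative interior of $P$ and work with the affine combinations $z+\delta(x-z)$ and $z+\delta(y-z)$; their componentwise maximum equals $(1-\delta)z+\delta\,\max(x,y)\in P$, and solving for $\max(x,y)$ exhibits it as an affine combination of the two points $z$ and $(1-\delta)z+\delta\,\max(x,y)$ of $\aff{P}$, hence in $\aff{P}$.
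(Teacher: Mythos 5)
Your proof is correct and is essentially the paper's own argument: the paper scales $P$ about a relative-interior point to a copy $P'\subseteq\aff{P}$ containing the two given points and invokes scaling-invariance of distributivity, which is exactly your homothety $z+\delta(\cdot-z)$ read in the opposite direction. The only difference is that you make explicit the relative-interior center and the identity $\max(z+\delta x,\,z+\delta y)=z+\delta\max(x,y)$ that the paper leaves implicit in ``scaling preserves distributivity.''
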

\begin{proof}
  Let $P$ be a $D$-polyhedron and $x,y\in\aff{P}$. Scale $P$ to $P'$ such that
  $x,y\in P'\subseteq \aff{P}$. Since scaling preserves distributivity
  $\min(x,y),\max(x,y)\in P'\subseteq \aff{P}$.
\end{proof}

\begin{lem}\label{lem:hal}
  Let $z\in\mathbb{R}^{n}$ and $c\in\mathbb{R}$. If
  $A=\{p\in\mathbb{R}^{n}\mid \langle z,p\rangle=c\}$ is distributive then the
  \term{halfspace} $A^{+}=\{p\in\mathbb{R}^{n}\mid \langle z,p\rangle\leq c\}$
  is distributive as well.
\end{lem}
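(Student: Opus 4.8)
The plan is to reduce the statement to a very restricted form for the normal vector $z$ and then verify closure under $\max$ and $\min$ directly. First I would dispose of the degenerate case $z=\mathbf{0}$, where $A^{+}$ is either empty or all of $\mathbb{R}^{n}$ and the claim is trivial, and assume from now on that $A$ is a genuine hyperplane. Then $\dimm{A}=n-1$ and its orthogonal complement $A^{\bot}=\spann{z}$ is one-dimensional. It is worth noting at the outset why some structure of $z$ is unavoidable: for $x,y\in A^{+}$ the identity $\langle z,x\rangle+\langle z,y\rangle=\langle z,\max(x,y)\rangle+\langle z,\min(x,y)\rangle$ (used already in Lemma~\ref{lem:fac}) shows that at most one of $\max(x,y),\min(x,y)$ can leave $A^{+}$, but ruling this out for both requires knowing more about $z$.

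The key step is therefore to pin down $z$. Since $A$ is distributive, Proposition~\ref{pro:ort} lets me write $A=\{p\mid N_{\Lambda}^{\top}p=c'\}$ for a generalized network matrix $N_{\Lambda}$ whose columns are linearly independent and span $A^{\bot}$. Because $\dimm{A^{\bot}}=1$, the matrix $N_{\Lambda}$ has exactly one column; this also drops out of the cardinality count $|B|+|\col{N_{\Lambda}}|=n$ in the proof of Proposition~\ref{pro:ort} together with $|B|=\dimm{A}=n-1$. That single column is either $e_{j}-\lambda_{a}e_{i}$ with $\lambda_{a}>0$ (a non-loop arc) or $e_{i}$ (a loop), and $z$ is a nonzero scalar multiple of it. In other words, up to a nonzero scalar the normal $z$ has at most two nonzero entries, and if it has exactly two then they carry opposite signs.

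It then remains to check that any halfspace with such a normal is closed under componentwise $\max$ and $\min$, which I would organize into two cases. If $z$ has a single nonzero entry then $A^{+}$ is a coordinate slab $\{p\mid \pm p_{k}\leq c''\}$ and closure is immediate. If $z$ has two nonzero entries of opposite sign, I relabel coordinates and factor out a positive scalar so that $\langle z,p\rangle$ is a positive multiple of $p_{j}-\lambda p_{i}$ with $\lambda>0$, so that $A^{+}=\{p\mid p_{j}-\lambda p_{i}\leq c''\}$. For $x,y\in A^{+}$ I bound the join by taking whichever of $x,y$, say $x$, attains $\max(x_{j},y_{j})$ in the $j$-coordinate, and using $\max(x_{i},y_{i})\geq x_{i}$ with $\lambda>0$ to get $\max(x,y)_{j}-\lambda\max(x,y)_{i}\leq x_{j}-\lambda x_{i}\leq c''$; symmetrically I bound the meet by taking whichever of $x,y$ attains $\min(x_{i},y_{i})$ in the $i$-coordinate.

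The only real obstacle is the structural step — extracting that a distributive hyperplane has a normal supported on at most two coordinates with opposite signs. Everything downstream is a routine two-line case check. I expect the cleanest route to that fact is the dimension count applied to Proposition~\ref{pro:ort}; an alternative is to argue directly from an NND basis $B$ of $A$ via Proposition~\ref{pro:bas}, observing that $z\perp b$ for every $b\in B$ and that the pairwise-disjoint supports force $\supp{z}$ to meet the support of at most one $b$, but this is essentially a re-derivation of Proposition~\ref{pro:ort} in the codimension-one case.
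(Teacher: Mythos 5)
Your proof is correct, but it takes a genuinely different route from the paper's. The paper disposes of the lemma with a two-line continuity argument that never needs to know what $z$ looks like: if $x,y\in A^{+}$ but $\max(x,y)\notin A^{+}$, then each of the segments $[x,\max(x,y)]$ and $[y,\max(x,y)]$ crosses the hyperplane $A$ in a point, say $x'$ and $y'$; in every coordinate $i$ with $\max(x_i,y_i)=x_i$ the first segment is constant (and symmetrically for $y$), so $\max(x',y')=\max(x,y)$, and distributivity of $A$ forces $\max(x,y)\in A\subseteq A^{+}$, a contradiction (the argument for $\min$ is analogous). You instead invoke the structure theory: Proposition~\ref{pro:ort} together with the dimension count (its proof uses only Lemma~\ref{lem:bas} and Proposition~\ref{pro:bas}, not this lemma, so there is no circularity) pins down $z$, up to a nonzero scalar, as $e_j-\lambda e_i$ with $\lambda>0$ or as $\pm e_k$, after which you verify closure under componentwise $\max$ and $\min$ by hand; your case computation is correct, including the degenerate case $z=\mathbf{0}$ and the sign normalization. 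What the paper's route buys is brevity and self-containedness: it uses only convexity of $A^{+}$ and the fact that $\max(x,y)$ dominates $x$ and $y$, with no case analysis and no appeal to the characterization of distributive hyperplanes. What your route buys is explicitness: it says exactly which hyperplanes can be distributive and in effect establishes the ``$\Longleftarrow$'' direction of Theorem~\ref{thm:geom} in codimension one by direct computation, making the lemma's content concrete as closure of the inequality $p_j-\lambda p_i\leq c$ under joins and meets; the cost is that you deploy the full strength of Proposition~\ref{pro:ort} for a statement that admits an elementary proof.
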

\begin{proof}
  Suppose that $x,y\in A^{+}$ such that $\max(x,y)\notin
  A^{+}$. The line segments $[x,\max(x,y)]$ and $[y,\max(x,y)]$ 
  contain points $x',y'\in A$ such that
  $\max(x',y')=\max(x,y)$. This contradicts the distributivity of $A$.
\end{proof}

\begin{theorem}\label{thm:geom}
  A polyhedron $P\subseteq \mathbb{R}^{n}$ is a D-polyhedron if and
  only if
  $$
  P=\{p\in\mathbb{R}^{n}\mid N_{\Lambda}^{\top}p\leq c\}
  $$
  for some generalized network-matrix $N_{\Lambda}$ and $c\in\mathbb{R}^{m}$.
\end{theorem}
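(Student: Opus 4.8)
The plan is to read the theorem as the assertion that D-polyhedra are exactly the intersections of \emph{distributive halfspaces}, and to exploit that a halfspace is distributive precisely when its bounding hyperplane is (Lemma~\ref{lem:hal}), while the hyperplanes whose normals are columns $e_j-\lambda e_i$ of a generalized network-matrix are exactly the distributive ones (Proposition~\ref{pro:ort}). I would prove the two implications separately.

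For ``$\Longleftarrow$'', suppose $P=\{p\mid N_\Lambda^\top p\le c\}$. Each single column $z_a$ of $N_\Lambda$ is by itself a (one-arc) generalized network-matrix, so Proposition~\ref{pro:ort} shows the hyperplane $A_a=\{p\mid\langle z_a,p\rangle=c_a\}$ is distributive, whence by Lemma~\ref{lem:hal} the halfspace $A_a^+=\{p\mid\langle z_a,p\rangle\le c_a\}$ is distributive as well. Since $P=\bigcap_a A_a^+$ and distributivity is preserved under intersection, $P$ is a D-polyhedron.

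For ``$\Longrightarrow$'', I would first treat the full-dimensional case. Fix an irredundant description $P=\bigcap_k A_k^+$ with facets $F_k=P\cap A_k$. By Lemma~\ref{lem:fac} each $F_k$ is a D-polyhedron, and since $\dim F_k=n-1$ its affine hull equals the supporting hyperplane $A_k$; by Lemma~\ref{lem:aff} this hyperplane is distributive. Applying Proposition~\ref{pro:ort} to $A_k$, whose orthogonal complement is one-dimensional, shows its normal is, up to scaling, either $e_j-\lambda e_i$ with $\lambda>0$ or a loop normal $\pm e_i$. The only delicate point is orientation: the halfspace fixes one sign of the normal, but negating $e_j-\lambda e_i$ yields a positive multiple of $e_i-\lambda^{-1}e_j$, the column of the reversed arc, and negating a loop normal gives another valid loop column; hence the correctly oriented normal is always realizable as a generalized network-matrix column. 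Collecting these columns produces the desired $N_\Lambda$.

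Removing full-dimensionality is the step I expect to be the main obstacle, and I would handle it as follows. Lemma~\ref{lem:aff} applied to $P$ shows $\aff{P}$ is distributive, so by Proposition~\ref{pro:ort} it is a solution set $\{p\mid M^\top p=d\}$ of a generalized network-matrix; each equation splits into two inequalities whose opposite normals $\pm v$ are both realizable columns by the same arc-reversal/loop argument as above. Relative to $\aff{P}$ the polyhedron $P$ is full-dimensional and thus cut out by its facet-halfspaces. For a facet $F_k$ the hull $\aff{F_k}$ is again distributive (Lemmas~\ref{lem:fac} and~\ref{lem:aff}), and since $\aff{F_k}^\perp$ strictly contains $\aff{P}^\perp$, Proposition~\ref{pro:ort} provides a column $z_k$ of the associated network-matrix lying outside $\aff{P}^\perp$. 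This $z_k$ is constant on $\aff{F_k}$ but non-constant on $\aff{P}$, so $\{p\mid\langle z_k,p\rangle=c_k\}$ meets $\aff{P}$ in exactly $\aff{F_k}$ and hence carves out the facet. Orienting $z_k$ as ``$\le$'' and adjoining it, together with the two-sided affine-hull inequalities, to $N_\Lambda$ gives a representation whose feasible set is precisely the points of $\aff{P}$ lying on the correct side of every facet, i.e.\ $P$ itself.
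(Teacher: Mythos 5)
Your proof is correct and follows essentially the same route as the paper's: both directions rest on Lemmas~\ref{lem:fac}, \ref{lem:aff}, \ref{lem:hal} and Proposition~\ref{pro:ort}, on writing $P$ as $\aff{P}$ intersected with facet-defining halfspaces (the Representation Theorem), and on the arc-reversal/loop trick ($-(e_j-\lambda e_i)=\lambda(e_i-\lambda^{-1}e_j)$ and $-e_i=e_i-2e_i$) to fix orientations and split equalities into pairs of inequalities. Your selection of a facet-defining column $z_k\in\aff{F_k}^\perp\setminus\aff{P}^\perp$ merely spells out what the paper compresses into ``choose a row $z_F$ such that $A_F^+$ is a facet-defining halfspace.''
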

\begin{proof}
  ``$\Longrightarrow$'': By Lemma~\ref{lem:fac} every face $F$ of $P$ is
  distributive. Lemma~\ref{lem:aff} ensures that $\aff{F}$ is distributive.
  Proposition~\ref{pro:ort} yields $\aff{F}=\{p\in\mathbb{R}^{n}\mid
  N(F)_{\Lambda(F)}^{\top}p=c(F)\}$ for a generalized network-matrix
  $N(F)_{\Lambda(F)}$. In particular this holds for for $\aff{P}$. Now if $F$
  is a facet of $P$ every row $z$ of $N(F)_{\Lambda(F)}^{\top}$ is a
  generalized network-matrix as well. Choose a row $z_F$ such that
  $A_F^+:=\{p\in\mathbb{R}^{n}\mid \langle z_F,p\rangle\leq c_F\}$ is a facet-defining 
  halfspace for $F$.

  By the Representation Theorem for Polyhedra~\cite{Zie-95} we can write
$$
P=(\bigcap_{F\text{ facet}}A_F^+)\cap\aff{P}.
$$
The above chain of arguments yields
$$
P=(\bigcap_{F\text{ facet}}\{p\in\mathbb{R}^{n}\mid \langle z_F,p\rangle\leq
c_F\})\cap\{p\in\mathbb{R}^{n}\mid N(P)_{\Lambda(P)}^{\top}p=c(P)\}.
$$
Here the single matrices involved are generalized network-matrices. Glueing
all these matrices horizontically together one obtains a single generalized
network-matrix $N_{\Lambda}$ and a vector $c$ such that
$P=\{p\in\mathbb{R}^{n}\mid N_{\Lambda}^{\top}p\leqq c\}$. It remains to show,
that we can replace defining equalities by inequalities, while preserving a
network-matrix representation. We distinguish two cases.
{
\Item(1) If $\lambda_{a}\neq 0$ we have $\langle
e_j-\lambda_{a}e_i,p\rangle =c_a \Leftrightarrow (\langle e_j-\lambda_{a}e_i
,p\rangle\leq c_a\text{ and }\langle e_i-\lambda_{a}^{-1}e_j,p\rangle\leq
-\lambda_{a}^{-1}c_a)$.
\Item(2) If $\lambda_{a}=0$ since $a=(i,i)$ must be a loop of $D_{\Lambda}$ we 
have $\langle e_i-0e_i,p\rangle=c_a$, which can be replaced by 
$(\langle e_i-0e_i,p\rangle\leq c_a\text{ and } \langle e_i-2e_i,p\rangle\leq
-c_a)$.
\par}
\medskip

\ni
In each of the cases a single arc with an equality-constraint is
replaced by a pair of oppositely oriented arcs. 
This shows that we can write $P$ as $P=\{p\in\mathbb{R}^{n}\mid
N_{\Lambda}^{\top}p\leq c\}$, for some generalized
network-matrix $N_{\Lambda}$.

``$\Longleftarrow$'': If $P=\{p\in\mathbb{R}^{n}\mid
N_{\Lambda}^{\top}p\leq c\}$ then $P$ is the intersection of bounded
halfspaces, which are distributive by Lemma~\ref{lem:hal}, because
their defining hyperspaces are distributive by
Proposition~\ref{pro:ort}. Since intersection preserves
distributivity, $P$ is a D-polyhedron.
\end{proof}

\begin{rem}\label{rem:equal}
From the proof it follows that the system $N_{\Lambda}^{\top}p\leqq c$
with equality- and inequality-constraints
defines a D-polyhedron whenever $N_{\Lambda}$ is a generalized network-matrix.
\end{rem}

As an immediate application of the theorem we obtain that \textit{order polytopes}
($\Lambda,c\in\{0,1\}^{m}$) are D-polytopes. More generally
($\Lambda\in\{0,1\}^{m}$ and $c\in\mathbb{Z}^{m}$) one obtains distributivity
for a more general class of polytopes that has been named
\textit{polytropes} in~\cite{Jos-08} and  \textit{alcoved
  polytopes} in~\cite{Lam-06}. We will return to this class in
Subsection~\ref{subsec:bonds}.

\begin{rem}\label{rem:scale}
  Generalized network matrices are not the only matrices that can be used to
  represent D-polyhedra. 
\end{rem}

To see this observe that scaling columns of $N_{\lambda}$ and entries
of $c$ simultaneously preserves the polyhedron but may destroy the
property of the matrix. There may, however, be representations of 
different type. Consider e.g., the D-polyhedron
consisting of all scalar multiples of $(1,1,1,1)$
in $\mathbb{R}^{4}$, it can be described by the   
six inequalities
 $\sum_{i\in A} x_i - \sum_{i\not\in {A}} x_i \leq 0$, for $A$
a 2-subset of $\{1,2,3,4\}$.

\section{Combinatorial Model}\label{sec:comb}

We have shown that a D-polyhedron $P$ is completely described by an
arc-parameterized digraph $D_{\Lambda}$ and an arc-capacity vector
$c\in\mathbb{R}^{m}$. This characterization suggests to consider the points
of $P$ as `graph objects'.  A \term{potential} for $D_{\Lambda}$ is a
vector $p\in\mathbb{R}^{n}$, which assigns a real number $p_{i}$ to
each vertex $i$ of $D_{\Lambda}$, such that the inequality
$p_{j}-\lambda_{a}p_{i}\leq c_{a}$ holds for every arc $a=(i,j)$ of
$D_{\Lambda}$.  The points of the D-polyhedron $P(D_{\Lambda})_{\leq
  c}=\{p\in\mathbb{R}^{n}\mid N_{\Lambda}^{\top}p\leq c\}$ are exactly
the potentials of $D_{\Lambda}$.
Theorem~\ref{thm:geom} then can be rewritten
\begin{theorem}
  A polyhedron is distributive if and only if it is the set of 
  potentials of an arc-parameterized digraph $D_{\Lambda}$.
\end{theorem}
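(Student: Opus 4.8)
The plan is to recognize this statement as a verbatim reformulation of Theorem~\ref{thm:geom} in the language of potentials, so that essentially all of the work has already been done. Recall that by the Observation in Section~\ref{sec:intro} a polyhedron is distributive exactly when it is a D-polyhedron, and that Theorem~\ref{thm:geom} characterizes D-polyhedra as precisely the sets $\{p\in\mathbb{R}^{n}\mid N_{\Lambda}^{\top}p\leq c\}$ with $N_{\Lambda}$ a generalized network-matrix. The single remaining step is to check that such a set is, word for word, the set of potentials of the arc-parameterized digraph $D_{\Lambda}$ carrying the capacities $c$.

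I would carry out this identification directly. The column of $N_{\Lambda}$ attached to an arc $a=(i,j)$ is $e_{j}-\lambda_{a}e_{i}$, so the $a$-th coordinate of $N_{\Lambda}^{\top}p$ is $\langle e_{j}-\lambda_{a}e_{i},\,p\rangle=p_{j}-\lambda_{a}p_{i}$. Hence the inequality in row $a$ of the system $N_{\Lambda}^{\top}p\leq c$ reads $p_{j}-\lambda_{a}p_{i}\leq c_{a}$, which is exactly the defining condition for $p$ to be a potential of $D_{\Lambda}$. Therefore $P(D_{\Lambda})_{\leq c}=\{p\mid N_{\Lambda}^{\top}p\leq c\}$ coincides with the set of potentials, as already noted in the paragraph preceding the theorem. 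Both directions now follow: if $P$ is distributive, Theorem~\ref{thm:geom} furnishes a generalized network-matrix $N_{\Lambda}$ and a vector $c$ with $P=\{p\mid N_{\Lambda}^{\top}p\leq c\}$, and reading this datum as an arc-parameterized digraph with capacities exhibits $P$ as a potential set; conversely any potential set is of this matrix form, so Theorem~\ref{thm:geom} certifies it is a D-polyhedron, hence distributive.

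There is no genuine obstacle here; the mathematical content is entirely carried by Theorem~\ref{thm:geom}. The only point that warrants a moment's care is the dictionary between the two descriptions of the same data: a generalized network-matrix together with a right-hand side $c$ on one side, and an arc-parameterized digraph $D_{\Lambda}$ with arc-capacities $c$ on the other. Since every generalized network-matrix is by definition the matrix of some $D_{\Lambda}$ (with $\lambda_{a}=0$ permitted only on loops) and conversely, this correspondence is a bijection, and no further argument is needed.
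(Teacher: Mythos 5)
Your proposal is correct and matches the paper exactly: the paper states this theorem without a separate proof, presenting it (in the preceding paragraph) as a direct rewriting of Theorem~\ref{thm:geom} via the identification $P(D_{\Lambda})_{\leq c}=\{p\in\mathbb{R}^{n}\mid N_{\Lambda}^{\top}p\leq c\}$ with the set of potentials. Your explicit check that row $a=(i,j)$ of $N_{\Lambda}^{\top}p\leq c$ reads $p_{j}-\lambda_{a}p_{i}\leq c_{a}$ is precisely the dictionary the paper relies on.
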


Interestingly there is a second class of graph objects associated with
the points of a \hbox{D-polyhedron}. While potentials are weights on
vertices this second class consists of weights on the arcs of
$D_{\Lambda}$.  Given a D-polyhedron $P=\{p\in\mathbb{R}^{n}\mid
N_{\Lambda}^{\top}p\leq c\}$ we look at
$\mathcal{B}(D_{\Lambda})_{\leq c}:=\{x\in\mathbb{R}^{m}\mid {x \leq
  c} \text{ and }x\in\text{Im}(N_{\Lambda}^{\top})\}$. Note that $p\in
P$ if and only if $ N_{\Lambda}^{\top}p \in
\mathcal{B}(D_{\Lambda})_{\leq c}$, i.e.,
$\mathcal{B}(D_{\Lambda})_{\leq c} = N_{\Lambda}^{\top}P$.

In the spirit of the terminology of \textit{generalized flow},
c.f.~\cite{Ahu-93}, the elements of $\mathcal{B}(D_{\Lambda})_{\leq c}$ will
be called \term{generalized bonds}.

\begin{theorem}\label{thm:genbond}
  Let $D_{\Lambda}$ be an arc-parameterized digraph with capacities $c\in
  \mathbb{R}^{m}$. The set $\mathcal{B}(D_{\Lambda})_{\leq c}$ inherits the
  structure of a distributive lattice from a bijection $P'\to
  \mathcal{B}(D_{\Lambda})_{\leq c}$, where $P'$ is a  D-polyhedron
  that can be obtained from $P = P(D_{\Lambda})_{\leq c}$ by intersecting
  $P$ with some hyperplanes of type $H_i =\{ x \mid x_i=0\}$.
\end{theorem}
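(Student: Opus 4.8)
The plan is to promote the canonical surjection $N_\Lambda^\top\colon P\to\mathcal{B}(D_\Lambda)_{\leq c}$ — which is onto because $\mathcal{B}(D_\Lambda)_{\leq c}=N_\Lambda^\top P$, as noted just before the statement — to a bijection by cutting $P$ down to a transversal of its fibers. The fibers are exactly the cosets of $K:=\ker N_\Lambda^\top=\{p\mid N_\Lambda^\top p=\mathbf{0}\}$, so it suffices to intersect $P$ with an affine subspace that meets every coset of $K$ in a single point. The key structural input is that $K$ is distributive: it is the solution set of $N_\Lambda^\top p=\mathbf{0}$ for a generalized network matrix, so by the ``$\Longleftarrow$'' direction of Proposition~\ref{pro:ort} it is a distributive linear space, and hence by Proposition~\ref{pro:bas} it has an NND basis $B$.

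Using $B$ I would build the hyperplanes explicitly. For each $b\in B$ fix an index $i_b\in\supp{b}$ (so that $b_{i_b}>0$) and set
$$
P':=P\cap\bigcap_{b\in B}H_{i_b},\qquad H_{i_b}=\{x\mid x_{i_b}=0\}.
$$
Each $H_{i_b}$ is a distributive affine space — it is spanned by the unit vectors $e_j$ with $j\neq i_b$, which form an NND basis — so by Proposition~\ref{pro:bas} it is a D-polyhedron; since being a D-polyhedron is preserved under intersection, $P'$ is a D-polyhedron, and in particular a distributive lattice under the dominance order.

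It then remains to check that $N_\Lambda^\top$ restricts to a bijection $P'\to\mathcal{B}(D_\Lambda)_{\leq c}$, and here the disjointness of the supports in $B$ does all the work. For injectivity, if $p,p'\in P'$ have the same image then $p-p'\in K$, so $p-p'=\sum_{b\in B}\mu_b b$; reading off coordinate $i_b$ and using that $b$ is the unique element of $B$ whose support contains $i_b$ gives $0=p_{i_b}-p'_{i_b}=\mu_b b_{i_b}$, forcing $\mu_b=0$ for every $b$, hence $p=p'$. For surjectivity, given $x\in\mathcal{B}(D_\Lambda)_{\leq c}$ pick any $p\in P$ with $N_\Lambda^\top p=x$ and replace it by $p-\sum_{b\in B}(p_{i_b}/b_{i_b})\,b$; this lies in each $H_{i_b}$ by the same coordinatewise computation, and it still lies in $P$ because subtracting an element of $K=\ker N_\Lambda^\top$ leaves the image $x\leq c$ unchanged. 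Transporting the lattice operations of $P'$ across this bijection then endows $\mathcal{B}(D_\Lambda)_{\leq c}$ with a distributive lattice structure.

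I expect the main obstacle to be conceptual rather than computational: the crux is recognizing that the fibers of $N_\Lambda^\top$ are governed by the distributive space $K$, so that an NND basis of $K$ simultaneously tells us which coordinate hyperplanes to use \emph{and}, through the disjointness of supports, guarantees that each fiber is met in exactly one point. Once the NND basis is in hand, the verification of the bijection is routine.
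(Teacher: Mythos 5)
Your proof is correct and follows essentially the same route as the paper: both use Proposition~\ref{pro:ort} to see that $\mathrm{Ker}(N_\Lambda^{\top})$ is distributive, extract an NND basis $B$ via Proposition~\ref{pro:bas}, pick one support index $i_b$ per basis vector, and intersect $P$ with the coordinate hyperplanes $H_{i_b}$ (the paper phrases this as intersecting with $A=\spann{\{e_i \mid i\in[n]\setminus I(B)\}}$, which is the same subspace), then verify injectivity and surjectivity exactly as you do, using disjointness of supports. Your correction term $p_{i_b}/b_{i_b}$ is in fact slightly more careful than the paper's $p_{i(b)}$, which tacitly assumes the NND basis normalized so that $b_{i(b)}=1$.
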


\begin{proof}
  Since $\mathcal{B}(D_{\Lambda})_{\leq c}=N^{\top}_{\Lambda}P$ for the
  D-polyhedron of feasible vertex-potentials of $D_{\Lambda}$, and
  $N_{\Lambda}^{\top}$ is a linear map, the set of generalized bonds is a
  polyhedron.

 If $N_{\Lambda}^{\top}$ is bijective on $P$ the set
 $\mathcal{B}(D_{\Lambda})_{\leq c}$ inherits the distributive lattice
 structure from $P$. This is not always the case. Later we show that we can
 always find a D-polyhedron $P'\subseteq P$ such that $N_{\Lambda}^{\top}$ is
 a bijection from $P'$ to $\mathcal{B}(D_{\Lambda})_{\leq c}$.

 From Proposition~\ref{pro:ort} we know that
 $\text{Ker}(N_{\Lambda}^{\top})$ is a distributive space. 
 By Proposition~\ref{pro:bas} there is a NND basis $B$ of
 $\text{Ker}(N_{\Lambda}^{\top})$. 
 For every $b\in B$ fix an arbitrary element $i(b)\in\supp{b}$. Denote the set of
 these elements by $I(B)$. Define $A:=\spann{\{e_i\in\mathbb{R}^n\mid
   i\in[n]\backslash I(B)\}}$.

 {
\Item(1) $A$ is distributive:
\\
 By definition $A$ has a NND basis, i.e. is distributive by
 Proposition~\ref{pro:bas}.

 \Item(2) $\mathcal{B}(D_{\Lambda})=N^{\top}_{\Lambda}A$:
\\
 Let $N_{\Lambda}^{\top}p=x\in \mathcal{B}(D_{\Lambda})$. Define
 $p':=p-\sum_{b\in B}(p_{i(b)}b)$. Since $\sum_{b\in B}(p_{i(b)}b)\in
 \text{Ker}(N_{\Lambda}^{\top})$ we have $N_{\Lambda}^{\top}p'=x$. Moreover
 $p'_i=0$ for all $i\in I(B)$, i.e. $p'\in A$.

 \Item(3) $N^{\top}_{\Lambda}:A\hookrightarrow \mathcal{B}(D_{\Lambda})$ is
 injective:
\\
 Suppose there are $p,p'\in A$ such that
 $N_{\Lambda}^{\top}p=N_{\Lambda}^{\top}p'$. Then $p-p'\in
 \text{Ker}(N_{\Lambda}^{\top})\cap A$. But by the definition of $A$ this
 intersection is trivial, i.e., $p=p'$.
\par}
\medskip

\ni
We have shown that $A\cong \mathcal{B}(D_{\Lambda})$ via $N^{\top}_{\Lambda}$
and that $A$ is distributive. Thus $P':=P\cap A$ is a D-polyhedron such that
the map of the matrix $N^{\top}_{\Lambda}$ is a  bijection from 
$P'$ to $\mathcal{B}(D_{\Lambda})_{\leq c}$.
\end{proof}

The intersection of $P$ with $H_i$ can be modelled by adding a loop
$a=(i,i)$ with capacity $c_a=0$ to the digraph. Hence, with Remark~\ref{rem:equal}
the preceding theorem says that for every $\mathcal{B}(D_{\Lambda})_{\leq c}$
we can add some loops to yield a graph $D'_{\Lambda'}$ and capacities $c'$ such that
$$
\mathcal{B}(D_{\Lambda})_{\leq c}=\mathcal{B}(D'_{\Lambda'})_{\leq c'}\cong
P(D'_{\Lambda'})_{\leq c'}=P'.
$$ 
In the following we will always assume to be given generalized bonds
$\mathcal{B}(D_{\Lambda})_{\leq c}$ such that
$\mathcal{B}(D_{\Lambda})_{\leq c} \cong P(D_{\Lambda})_{\leq c}$.
In this case we call $(D_{\Lambda},c)$ \term{reduced}.

Note that $\mathcal{B}(D_{\Lambda})_{\leq c}$ can be far from being a
D-polyhedron, but it inherits the distributive lattice structure via an
isomorphism from a D-polyhedron.

In the following we investigate generalized bonds, i.e., the elements
of $\mathcal{B}_{\Lambda}(D)$, as objects in their own right.  Since
$\mathcal{B}_{\Lambda}(D)=\text{Im}(N_{\Lambda}^{\top}) =
\text{Ker}(N_{\Lambda})^{\bot}$ we have $\langle x,f\rangle=0$ for
all $x\in \mathcal{B}_{\Lambda}(D)$ and $f\in
\text{Ker}(N_{\Lambda})$. Understanding the elements of
$\text{Ker}(N_{\Lambda})$ as objects in the arc space of
$D_{\Lambda}$ will be vital to our analysis. In
Subsection~\ref{subsec:bonds} we will review the case of ordinary
bonds, which leads to a description closely related to the definition of
$\Delta$-bonds in Section~\ref{sec:app}. Recall that this definition
was based on the notion of \textit{circular balance}. In
Subsection~\ref{subsec:genp} we will then be able to describe the
generalized bonds of $D_{\Lambda}$ as capacity-respecting arc values,
which satisfy a \textit{generalized circular balance condition} around
elements of $\text{Ker}(N_{\Lambda})$, see
Theorem~\ref{thm:genflowdiff}.

\subsection{Bonds}\label{subsec:bonds}

Consider as an example the case where $D$ is a digraph without loops
and $\Lambda=\mathbf{1}$. In this case~$N_{\Lambda}$ is nothing but the
network-matrix $N$ of $D$. The elements of
$\text{Ker}(N)=:\mathcal{F}(D)$ are the flows of $D$, i.e those real
arc values $f\in \mathbb{R}^{m}$ which respect flow-conservation at
every vertex of $D$. Moreover, each support-minimal element of
$\mathcal{F}(D)$ is a scalar multiple of the \term{signed incidence
  vector} $\signvec{C}$ of a cycle $C$ of $D$, where $\signvec{C}_{a}$ is
$+1$ if $a$ is a forward arc of $C$, and~$-1$ if $a$ is a backward
arc, and $0$ otherwise.  The set $\mathcal{B}(D)$ of generalized bonds
of $D$ consists of those $x\in \mathbb{R}^{m}$ with $\langle
x,f\rangle=0$ for all flows $f$. This is equivalent to $\langle
x,\signvec{C}\rangle=0$ for all $C\in\mathcal{C}$. Now $\langle
  x,\signvec{C}\rangle =\delta(C,x)$ (see Section~\ref{sec:app}), hence
  $\mathcal{B}(D)_{\leq c}$ can be viewed as the set of
  \textit{real-valued} $\mathbf{0}$-bonds of $D$ respecting the arc
  capacities $c$.

Theorem~\ref{thm:genbond} yields a distributive lattice structure on
the set of real-valued $\mathbf{0}$-bonds of an arbitrary digraph $D$. 
We may use Lemma~\ref{lem:trans} to conclude
Theorem~\ref{thm:bond} if we can prove distributivity on the
\textit{integral} bonds. To this end we first we make the following
\begin{obs}\label{obs:inter}
  The intersection of a D-polytope $P\subseteq \mathbb{R}^{n}$ and any other
  (particularly finite) distributive sublattice $L$ of $\mathbb{R}^{n}$ yields
  a distributive lattice $P\cap L$.
\end{obs}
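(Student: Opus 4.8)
The plan is to reduce everything to the closure characterization of distributive lattices supplied by Fact~\ref{fact:dist}, which says that a subset $S\subseteq\mathbb{R}^n$ is a distributive lattice under $\leq_{\text{dom}}$ exactly when it is closed under the componentwise operations $\max$ and $\min$. This turns the statement into a one-line set-theoretic verification. First I would extract the two closure properties I need. Since $P$ is a D-polytope, the defining property of D-polyhedra (equivalently, the Observation that a polyhedron is a D-polyhedron iff it is a distributive lattice) gives that $P$ is closed under $\max$ and $\min$. Since $L$ is a distributive sublattice of $\mathbb{R}^n$, Fact~\ref{fact:dist} gives the same closure for $L$.

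The main step is then to check that intersection preserves this closure. Let $x,y\in P\cap L$. Because $x,y\in P$ and $P$ is closed, $\max(x,y),\min(x,y)\in P$; because $x,y\in L$ and $L$ is closed, $\max(x,y),\min(x,y)\in L$. Hence $\max(x,y),\min(x,y)\in P\cap L$, so $P\cap L$ is closed under $\max$ and $\min$, and one more application of Fact~\ref{fact:dist} yields that $P\cap L$ is a distributive lattice.

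There is essentially no obstacle here. The only point worth flagging is conceptual rather than technical: one must be sure that the meet and join used for $P$, for $L$, and for $P\cap L$ are all the same operations, so that the closures genuinely combine. This is the case precisely because all of them are the ambient componentwise $\max$ and $\min$ of the dominance order, so no compatibility condition between two different sublattice structures ever enters. The parenthetical ``particularly finite'' merely records the intended application, where $L$ is a finite distributive lattice (such as a sublattice of $\mathbb{Z}^n$) so that $P\cap L$ is a finite distributive lattice; finiteness plays no role in the argument itself.
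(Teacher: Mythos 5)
Your proof is correct and is exactly the argument the paper has in mind: the paper states this as an Observation without proof, precisely because closure under componentwise $\max$ and $\min$ (which $P$ has by the D-polyhedron definition and $L$ has as a sublattice of the dominance order) is trivially preserved by intersection, after which Fact~\ref{fact:dist} applies. Your closing remark that all three lattice structures use the same ambient operations is the right point to flag, and your observation that finiteness is irrelevant to the argument is also accurate.
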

So if $P\subseteq \mathbb{R}^{n}$ is a D-polyhedron then
$P\cap\mathbb{Z}^{n}$ is a distributive lattice. Since by
Theorem~\ref{thm:genbond} we can assume $N^{\top}$ to be bijective on $P$
we obtain a distributive lattice structure on  $N^{\top}(P\cap \mathbb{Z}^{n})$.

However, we want a distributive lattice on integral bonds, i.e., on
$\mathcal{B}(D)_{\leq c}\cap\mathbb{Z}^{m}$. Luckily~$N$ is a
totally unimodular matrix, which yields $\mathcal{B}(D)_{\leq
  c}\cap\mathbb{Z}^{m}=N^{\top}(P\cap \mathbb{Z}^{n})$, see~\cite{Sch-86},
i.e. the integral bonds carry a distributive lattice structure.

\subsection{General Parameters}\label{subsec:genp}

Lets now look at the case of general bonds of an arc-parameterized digraph
$D_{\Lambda}$. The aim of this section is to describe
$\mathcal{B}(D_{\Lambda})_{\leq c}$ as the orthogonal complement of
$\text{Ker}(N_{\Lambda})$ within the capacity bounds given by $c$. For
$f\in\mathbb{R}^{m}$ and $j\in V$ we define the \term{excess} of $f$ at $j$ as
$$
\omega(j,f):=(\sum_{a=(i,j)}f_{a})-(\sum_{a=(j,k)}\lambda_{a}f_{a}).
$$
Since $f\in\text{Ker}(N_{\Lambda})$ means $\omega(v,f)=0$ for all
$v\in V$ we think of $f$ as an edge-valuation satisfying a
\textit{generalized flow-conservation}. This justifies the name
\term{generalized flow} for elements of $\text{Ker}(N_{\Lambda})$. 
Generalized flows were introduced by Dantzig~\cite{Dan-63} in the sixties 
and there has been much interest in related algorithmic problems. For surveys on the 
work, see~\cite{Ahu-93, Tru-77}. The most efficient
algorithms known today have been proposed in~\cite{Fle-02}.

We will denote $\mathcal{F}(D_{\Lambda}):=\text{Ker}(N_{\Lambda})$ and
call it the \nct{generalized flow space}. Let ${\mathcal{C}}(D_{\Lambda})$ 
be the set of support-minimal vectors of
$\mathcal{F}(D_{\Lambda})\backslash\{\mathbf{0}\}$, i.e., 
$f\in{\mathcal{C}}(D_{\Lambda})$ if and only if $\supp{g}\subseteq\supp{f}$ implies 
$\supp{g}=\supp{f}$ for all $g\in\mathcal{F}(D_{\Lambda})\backslash\{\mathbf{0}\}$. 
Elements of ${\mathcal{C}}(D_{\Lambda})$ will be called 
\term{generalized cycles}. Since the support-minimal
vectors ${\mathcal{C}}(D_{\Lambda})$ span the entire space
$\mathcal{F}(D_{\Lambda})$; the generalized bonds of $D_{\Lambda}$ are
already determined by being orthogonal to
${\mathcal{C}}(D_{\Lambda})$, i.e., to all generalized cycles.

\smallskip

\centerline{\textit{What do generalized cycles look like?}}
\smallskip

For a
loop-free oriented arc set $S$ of $D_{\Lambda}$ define its \term{multiplier}
as 
$$
\lambda(S):=\prod_{a\in S}\lambda_{a}^{\signvec{S}_{a}},
$$
where $\signvec{S}_a=\pm 1$ depending on the orientation of $a$ in $S$.
 
A cycle $C$ in the underlying graph with a cyclic orientation will be
called \term{lossy} if $\lambda(C)<1$, and \term{gainy} if
$\lambda(C)>1$, and \term{breakeven} if $\lambda(C)=1$. A
\term{bicycle} is an oriented arc set that can be written as 
$C\cup W \cup C'$ with a gainy cycle $C$, a lossy cycle $C'$ and a (possibly
trivial) oriented path $W$ from $C$ to $C'$; moreover,
the intersection of $C$ and $C'$ is an interval of both and $W$
is minimal as to make the bicycle connected. 
In addition we require that $C$ and $C'$ are equally
oriented on common arcs. See Fig.~\ref{fig:bicycles2} for two generic
examples.

   \calc_figscale{55}
    \begin{figure}[htb]
    \centerline{\input{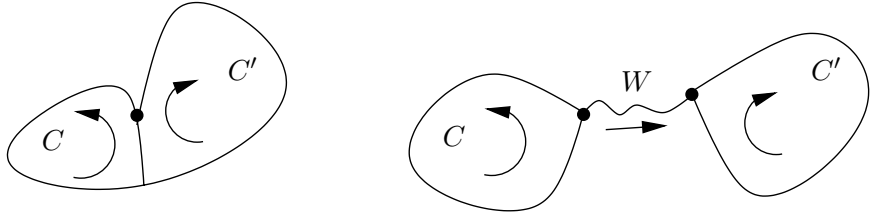}}
    \caption{Bicycles with $W=\emptyset$ and $W\not=\emptyset$.\label{fig:bicycles2}}
    \end{figure}
    

\begin{lem}\label{lem:incomp}
 A bicycle does not contain a breakeven cycle.
\end{lem}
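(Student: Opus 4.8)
The plan is to enumerate all cycles contained in a bicycle and to check that each one is either gainy or lossy, hence never breakeven. First I would read off the possible combinatorial shapes allowed by the definition. As an undirected graph, the bicycle $C \cup W \cup C'$ is one of three things: two cycles joined by a genuine path $W$ (when $C \cap C' = \emptyset$); two cycles meeting in a single vertex (when $W$ is trivial and $C, C'$ share only a vertex); or, when $C \cap C'$ is a common interval $P$ containing at least one arc, a theta-graph, i.e. three internally disjoint $u$--$v$ paths, where $u, v$ are the endpoints of $P$. In the first two shapes the cycle space is generated by $C$ and $C'$ and the only simple cycles are $C$ and $C'$ themselves; these are gainy and lossy by hypothesis, so neither is breakeven. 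Thus the only case requiring real work is the theta-graph.

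In the theta case I would write $C = P \cup Q$ and $C' = P \cup Q'$, where $Q := C \setminus P$ and $Q' := C' \setminus P$ are the two $v$--$u$ paths that $C$ and $C'$ do not share. The requirement that $C \cap C'$ be an interval of \emph{both} cycles guarantees that $Q$ and $Q'$ are internally disjoint (any further shared vertex or arc would break the intersection into more than one interval), so the theta-graph has exactly three simple cycles: $C$, $C'$, and the cycle $\tilde{C} := Q \cup Q'$. I would compute $\lambda(\tilde{C})$ by traversing $Q$ from $v$ to $u$ and then $Q'$ backwards from $u$ to $v$; since reversing a directed path inverts its multiplier, this gives $\lambda(\tilde{C}) = \lambda(Q)/\lambda(Q')$.

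The key point is the hypothesis that $C$ and $C'$ are equally oriented on their common arcs: both traverse $P$ in the same direction, so $\lambda(C) = \lambda(P)\lambda(Q)$ and $\lambda(C') = \lambda(P)\lambda(Q')$ with the \emph{same} factor $\lambda(P)$. Dividing, this common factor cancels, and I obtain $\lambda(\tilde{C}) = \lambda(Q)/\lambda(Q') = \lambda(C)/\lambda(C')$. All multipliers are products of positive arc-weights and hence strictly positive, and $C$ is gainy while $C'$ is lossy, so $\lambda(\tilde{C}) = \lambda(C)/\lambda(C') > 1$. Therefore $\tilde{C}$ is again gainy, and none of the three cycles $C$, $C'$, $\tilde{C}$ is breakeven, which completes the theta case.

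The main obstacle is the bookkeeping in the third paragraph: getting $\lambda(\tilde{C})$ right requires fixing consistent orientations of $P$, $Q$, $Q'$ and using that reversing a path inverts its multiplier, and it is exactly the ``equally oriented on common arcs'' hypothesis that forces the factor $\lambda(P)$ to cancel. Everything else is a finite case distinction on the shape of the bicycle, and the definition constrains this tightly enough (the intersection being an interval of both cycles) that no cycles beyond $C$, $C'$, and the single derived cycle $\tilde{C}$ can occur.
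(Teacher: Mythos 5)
Your proposal is correct and follows essentially the same route as the paper: the only cycle in a bicycle other than $C$ and $C'$ is the symmetric difference (arising only in the theta-graph case), and its multiplier equals $\lambda(C)/\lambda(C')$ (up to inversion) because the common interval's factor cancels — exactly the paper's computation $\lambda(\widetilde{C}) = (zx^{-1})^{\pm 1}$ with $zx^{-1} = zy(xy)^{-1} = \lambda(C')/\lambda(C) < 1$. Your explicit case analysis of the three combinatorial shapes just spells out what the paper leaves implicit.
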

\begin{proof}
  The cycles $C$ and $C'$ of a bicycle $H = C\cup W\cup C'$ are not breakeven.
  If $H$ contains an additional cycle $\widetilde{C}$, then the
  support of $\widetilde{C}$ must equal the symmetric difference of
  supports of $C$ and $C'$. Let $x:=\lambda(C\backslash C')$,
  $y:=\lambda(C\cap C')$, and $z:=\lambda(C'\backslash C)$, where
  orientations are taken according to $C$ and $C'$, respectively. We
  have $xy=\lambda(C)>1>\lambda(C')=zy$. Hence
  $\lambda(\widetilde{C})=(zx^{-1})^{\pm1}$, but
  $zx^{-1}=zy(xy)^{-1}<1$. That is, $\widetilde{C}$ cannot be
  breakeven.
\end{proof}

We call the set of bicycles and breakeven cycles of $D_{\Lambda}$ the
\nct{combinatorial support} for the set ${\mathcal{C}}(D_{\Lambda})$
of generalized cycles and denote it by $\underline{\mathcal{C}}(D_{\Lambda})$.
For $x\in\mathbb{R}^{m}$, let $\sign{x}$ be the \term{signed support} of $x$
, i.e., $\sign{x}$ is a partition $\supp{x}$ into positively and negatively oriented
 elements, where $i=\pm 1$ if $0\lessgtr x_i$, respectively.

Note that $\sign{\mathcal{C}(D_{\Lambda})}$ is exactly the
set of signed circuits of the \textit{oriented matroid} induced by the
matrix $N_{\Lambda}$, see~\cite{Bjo-93}. We justify the name
\textit{combinatorial support} by proving
$\underline{\mathcal{C}}(D_{\Lambda})=\sign{\mathcal{C}(D_{\Lambda})}$
in Theorem~\ref{thm:csII}. It turns out that oriented 
matroids arising as the combinatorial support of an arc-parameterized digraph 
are oriented versions of a combination of a classical \textit{cycle matroid} and 
a \textit{bicircular matroid}. The latter were introduced in the 
seventies~\cite{Mat-77, Sim-72}. Active research in the field can be 
found in~\cite{Gim-05,Gim-06,McN-08}. We feel that oriented matroids of
generalized network matrices are worth further investigation.

Given a walk $W=(a(0),\ldots, a(k))$ in $D$ we abuse notation and
identify $W$ with its \textit{signed support} $\sign{W}$, which is 
defined as the signed support of the signed incidence vector of $W$,
i.e, $\sign{W}:=\sign{\signvec{W}}$. Even more, 
we write $W_i$ and $W_{a(i)}$ for the same sign, namely the orientation 
of the arc $a(i)$ in $W$. Note that cycles and bicycles can be regarded 
to be walks; these will turn out to be the most interesting cases 
in our context.

A vector $f\subseteq\mathbb{R}^m$ is an \term{inner flow} of $W$ if
$\sign{f}=\pm\sign{W}$ and $f$ satisfies the generalized flow conservation law
between consecutive arcs of $W$.

\begin{lem}\label{lem:walk}
  Let $W=(a(0),\ldots, a(k))$ be a walk in $D_{\Lambda}$ and $f$ an inner flow
  of $W$. Then
$$
f_{a(k)}=K\lambda(W)^{-1}f_{a(0)}
$$
where the `correction term' $K$ is given by $K =
W_0W_k\lambda_{a(0)}^{\max(0,W_0)}\lambda_{a(k)}^{\min(0,W_k)}$.  In
particular the space of inner flows of $W$ is one-dimensional.
\end{lem}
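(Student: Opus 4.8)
The plan is to read off, from the inner-flow requirement, the local generalized-conservation equation attached to each consecutive pair of arcs, and to use it as a transfer rule propagating the single value $f_{a(0)}$ along $W$ up to $f_{a(k)}$. Write the vertices visited by $W$ as $v_0,\dots,v_{k+1}$, so that arc $a(i)$ joins $v_i$ and $v_{i+1}$ and $W_i\in\{+1,-1\}$ records whether the walk traverses $a(i)$ forward or backward. By definition, $f$ being an inner flow of $W$ amounts to a chain of balance conditions, one for each consecutive pair $a(i),a(i+1)$ at their common vertex $v_{i+1}$; since each such condition involves only $f_{a(i)}$ and $f_{a(i+1)}$, it is a single two-term equation in these two unknowns.

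Next I would compute the transfer coefficient $r_i:=f_{a(i+1)}/f_{a(i)}$. By the excess formula defining $\omega(\cdot,f)$, an arc contributes $+f$ to the balance at $v_{i+1}$ if $v_{i+1}$ is its head and $-\lambda f$ if $v_{i+1}$ is its tail, and which of these occurs is dictated by $W_i$ and $W_{i+1}$. Carrying out the four-case analysis on the pair $(W_i,W_{i+1})$ and solving for $r_i$ gives values that package uniformly as
$$r_i=W_iW_{i+1}\,\lambda_{a(i)}^{\max(0,W_i)-W_i}\,\lambda_{a(i+1)}^{\min(0,W_{i+1})-W_{i+1}}.$$
(For instance, two forward arcs give $r_i=\lambda_{a(i+1)}^{-1}$ and two backward arcs give $r_i=\lambda_{a(i)}$.) In particular every $r_i$ is nonzero, because all $\lambda_{a(i)}>0$ on the loop-free arcs of $W$.

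Then I would telescope $f_{a(k)}/f_{a(0)}=\prod_{i=0}^{k-1}r_i$. The sign factors collapse via $\prod_{i=0}^{k-1}W_iW_{i+1}=W_0W_k$, each interior $W_i$ appearing squared. For the multipliers, each interior arc $a(j)$ with $1\le j\le k-1$ accumulates the exponent $\bigl(\max(0,W_j)-W_j\bigr)+\bigl(\min(0,W_j)-W_j\bigr)=-W_j$ from $r_{j-1}$ and $r_j$, using $\max(0,t)+\min(0,t)=t$, while the two endpoint arcs contribute $\max(0,W_0)-W_0$ and $\min(0,W_k)-W_k$. The $-W_0$ and $-W_k$ parts of these endpoint exponents combine with $\prod_{j=1}^{k-1}\lambda_{a(j)}^{-W_j}$ to form $\lambda(W)^{-1}=\prod_{i=0}^{k}\lambda_{a(i)}^{-W_i}$, and the surviving $\lambda_{a(0)}^{\max(0,W_0)}\lambda_{a(k)}^{\min(0,W_k)}$ together with the sign $W_0W_k$ are exactly $K$. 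This yields $f_{a(k)}=K\lambda(W)^{-1}f_{a(0)}$.

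Finally, one-dimensionality follows at once: the coefficients $r_i$ determine $f_{a(1)},\dots,f_{a(k)}$ from the single free scalar $f_{a(0)}$, so the solution space of the conservation equations has dimension at most, hence exactly, one. To confirm that this whole line consists of genuine inner flows I would record that $\sign{r_i}=\sign{W_iW_{i+1}}$ in each of the four cases, so that $W_if_{a(i)}$ keeps a constant sign along $W$ and any nonzero solution automatically satisfies $\sign{f}=\pm\sign{W}$. I expect the only real obstacle to be the sign-and-orientation bookkeeping in the four-case computation of $r_i$, and in particular the asymmetry between the two endpoint arcs, which is exactly what produces the boundary correction $K$ instead of a clean factor $\lambda(W)^{-1}$.
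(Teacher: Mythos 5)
Your proof is correct, and it reaches the formula by a route that differs in execution from the paper's. The paper argues by induction on $k$: the case $k=0$ is checked directly, and for $k>0$ the walk is split into two overlapping subwalks $W'=(a(0),\dots,a(\ell))$ and $W''=(a(\ell),\dots,a(k))$, the inductive formula for $W'$ is substituted into the one for $W''$, and the four $\lambda_{a(\ell)}$-factors at the overlap arc are seen to collapse to the single factor needed for $\lambda(W'')^{-1}$. You instead solve the local conservation equation at each internal vertex explicitly, obtaining the one-step transfer coefficient $r_i=W_iW_{i+1}\lambda_{a(i)}^{\max(0,W_i)-W_i}\lambda_{a(i+1)}^{\min(0,W_{i+1})-W_{i+1}}$, and then telescope; your four cases agree with the excess formula (forward--forward gives $\lambda_{a(i+1)}^{-1}$, backward--backward gives $\lambda_{a(i)}$, forward--backward gives $-1$, backward--forward gives $-\lambda_{a(i)}\lambda_{a(i+1)}^{-1}$), and the exponent bookkeeping --- interior arcs accumulating $-W_j$, the endpoint arcs retaining $\max(0,W_0)$ and $\min(0,W_k)$ --- reproduces $K\lambda(W)^{-1}$ exactly. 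Your version buys two things. First, the paper's induction as written never actually invokes the conservation law: its base case $k=0$ is vacuous, and a walk with $k=1$ cannot be split into two shorter overlapping subwalks, so the two-arc case is the true base of the recursion and the paper leaves it implicit; your four-case computation of $r_i$ is precisely that step. Second, your observation that $\sign{r_i}=W_iW_{i+1}$, so that every nonzero solution of the conservation system automatically satisfies $\sign{f}=\pm\sign{W}$, makes the concluding one-dimensionality statement cleaner: the inner flows themselves are not literally a linear subspace, and what is one-dimensional is the solution space of the conservation equations, all of whose nonzero elements you show to be genuine inner flows.
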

\begin{proof}
 We proceed by induction on $k$.
 If $k=0$ then
\begin{eqnarray*}
  &~&{W}_{0}{W}_{0}\lambda_{a(0)}^{\max(0,{W}_{0})}\lambda_{a(0)}^{\min(0,{W}_{0})}\lambda(W)^{-1}f_{a(0)}\\
  &=&\lambda_{a(0)}^{{W}_{0}}\lambda(W)^{-1}f_{a(0)}\\
  &=&\lambda_{a(0)}^{{W}_{0}}\lambda_{a(0)}^{-{W}_{0}}f_{a(0)}\\
  &=& f_{a(0)}.
\end{eqnarray*}
If $k>0$ we can look at two overlapping walks $W'=(a(0),\ldots, a(\ell))$ and
$W''=(a(\ell),\ldots, a(k))$. Clearly $f$ restricted to $W'$ and $W''$
respectively satisfies the preconditions for the induction hypothesis. By applying
the induction hypothesis to $W''$ and $W'$ we obtain
\begin{eqnarray*}
f_{a(k)}&=& 
        {W}_{\ell}{W}_{k}\lambda_{a(\ell)}^{\max(0,{W}_{\ell})}
        \lambda_{a(k)}^{\min(0,{W}_{k})}\lambda(W'')^{-1}f_{a(\ell)}\quad\text{and}\\
f_{a(\ell)}&=&
        {W}_{0}{W}_{\ell}\lambda_{a(0)}^{\max(0,{W}_{0})}
        \lambda_{a(\ell)}^{\min(0,{W}_{\ell})}\lambda(W')^{-1}f_{a(\ell)}.
\end{eqnarray*}
Substitute the second formula into the first and observe that
${W}_{\ell}{W}_{\ell}=1$, and that from the product of four terms $\lambda_{a(\ell)}$
with different exponents the single $\lambda_{a(\ell)}^{-W_{\ell}}$ needed
for $\lambda(W'')^{-1}$ remains. This proves the claimed formula for $f_{a(k)}$.
\end{proof}

\begin{lem}\label{lem:path}
  Let $W=(a(0),\ldots, a(k))$ be a simple path from $v$ to $v'$ in
  $D_{\Lambda}$.
   If  $f$ is an
  inner flow of $W$ with $\sign{f}=\sign{W}$, then ${\omega(v,f)}<0$ and
  ${\omega(v',f)}>0$.
\end{lem}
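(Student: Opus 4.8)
The plan is to exploit the fact that an inner flow $f$ with $\sign{f}=\sign{W}$ is supported \emph{exactly} on the arc set of the simple path $W$, so that the excess at each endpoint collapses to a single term. Since $W$ is a \emph{simple} path from $v$ to $v'$, the endpoints are each met by precisely one arc of $W$—namely $a(0)$ at $v$ and $a(k)$ at $v'$—and no further arc of $W$ is incident to them. As all arcs $a$ with $f_a\neq 0$ lie on $W$, in the defining sum
$$
\omega(v,f)=\Big(\sum_{a=(i,v)}f_a\Big)-\Big(\sum_{a=(v,k)}\lambda_a f_a\Big)
$$
every term vanishes except the one coming from $a(0)$, and symmetrically only $a(k)$ survives in $\omega(v',f)$.

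First I would settle $\omega(v,f)<0$ by a case distinction on the orientation $W_{a(0)}$ of the initial arc. If $a(0)$ is traversed forward (so $a(0)=(v,u)$ is outgoing at $v$ and $W_{a(0)}=+1$), it contributes $-\lambda_{a(0)}f_{a(0)}$; here $f_{a(0)}>0$ because $\sign{f}=\sign{W}$, and $\lambda_{a(0)}>0$ since a path arc is never a loop, so the single surviving term is negative. If instead $a(0)$ is traversed backward (so $a(0)=(u,v)$ is incoming at $v$ and $W_{a(0)}=-1$), it contributes $+f_{a(0)}$ with $f_{a(0)}<0$, again giving $\omega(v,f)<0$.

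The computation at $v'$ is the mirror image: a forward terminal arc $a(k)=(w,v')$ is incoming at $v'$ and contributes $+f_{a(k)}$ with $f_{a(k)}>0$, while a backward terminal arc $a(k)=(v',w)$ is outgoing and contributes $-\lambda_{a(k)}f_{a(k)}$ with $f_{a(k)}<0$; in both cases $\omega(v',f)>0$. There is no analytic difficulty here; the only thing to watch is the bookkeeping that matches "traversed forward/backward in $W$'' against "incoming/outgoing at the vertex'' in the definition of $\omega$, together with the positivity $\lambda_{a(0)},\lambda_{a(k)}>0$ from non-loopness. Conceptually, the point is that the generalized flow-conservation law forces $\omega(j,f)=0$ at every \emph{interior} vertex, where two path-arcs meet and cancel, whereas at an endpoint only one path-arc is present, so its contribution stands uncancelled and carries the predicted sign.
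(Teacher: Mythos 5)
Your proof is correct. Both you and the paper exploit the same basic mechanism: since $\sign{f}=\sign{W}$ forces $\supp{f}$ to be exactly the arc set of the simple path $W$, the excess at each endpoint collapses to the single term contributed by $a(0)$ resp.\ $a(k)$, whose sign is then read off from the incoming/outgoing convention in the definition of $\omega$ together with $\lambda_{a(0)},\lambda_{a(k)}>0$ (no path arc is a loop). For $\omega(v,f)<0$ your argument is literally the paper's: its formula $\omega(v,f)=-W_{0}\lambda_{a(0)}^{\max(0,W_{0})}f_{a(0)}$ is just your two orientation cases written uniformly. The one genuine divergence is at $v'$: you run the mirror-image case distinction directly from $\sign{f_{a(k)}}=W_{k}$, whereas the paper invokes Lemma~\ref{lem:walk} to rewrite $f_{a(k)}$ in terms of $f_{a(0)}$, obtaining $\omega(v',f)=W_{0}\lambda_{a(0)}^{\max(0,W_{0})}\lambda(W)^{-1}f_{a(0)}$ before reading off the sign. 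Your route is shorter and more elementary: the sign of $f_{a(k)}$ is already given by the hypothesis, so the detour through the walk formula is unnecessary for the sign statement alone. What the paper's detour buys is the explicit relation tying the two endpoint excesses together via the path multiplier $\lambda(W)$ --- the same computational pattern it then reuses in Lemma~\ref{lem:cyc} for cycles, where (unlike here) comparing the first and last arc through $\lambda(C)$ is essential. For the lemma as stated, nothing is lost by your approach.
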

\begin{proof}
  By definition $\omega(v,f)=-W_{0}\lambda_{a(0)}^{\max(0,W_{0})}f_{a(0)}$.
  Since $\lambda_{a(0)}>0$ and $\sign{f_{a(0)}}=W_{0}$ we conclude
  ${\omega(v,f)}<0$.  For the second inequality we use
  Lemma~\ref{lem:walk}:
\begin{eqnarray*}
	    \omega(v',f)&=&W_{k}\lambda_{a(k)}^{-\min(0,W_{k})}f_{a(k)}\\
		&=&W_{k}\lambda_{a(k)}^{-\min(0,W_{k})}W_{0}W_{k}
                       \lambda_{a(0)}^{\max(0,W_{0})}\lambda_{a(k)}^{\min(0,W_{k})}
                       \lambda(W)^{-1}f_{a(0)}\\
                &=&W_{0}\lambda_{a(0)}^{\max(0,W_{0})}\lambda(W)^{-1}f_{a(0)}.
\end{eqnarray*}
Since $\lambda_{a(0)},\lambda(W)^{-1}>0$ and
$\sign{f_{a(0)}}=\sign{W_{a(0)}}=W_{0}$ we
conclude ${\omega(v',f)}>0$.
\end{proof}

\begin{lem}\label{lem:cyc}
  Let $C=(a(0),\ldots, a(k))$ be a cycle in $D_{\Lambda}$ and $f$ an inner
  flow of $C$ with $\sign{f}=\sign{C}$. Then the excess $\omega(v,f)$ at the initial
  vertex $v$ satisfies $\sign{\omega(v,f)}=\sign{1-\lambda(C)}$.
\end{lem}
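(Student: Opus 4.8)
The plan is to compute the excess $\omega(v,f)$ at the initial vertex $v$ of the cycle $C$ directly, using the structure of the cycle as a closed walk and the multiplicativity established in Lemma~\ref{lem:walk}. A cycle $C=(a(0),\ldots,a(k))$ returns to its starting vertex $v$, so $v$ is incident to both the first arc $a(0)$ and the last arc $a(k)$. The key observation is that the excess at $v$ collects exactly the two boundary contributions of these arcs, since all intermediate vertices of $C$ satisfy the generalized flow-conservation law by the definition of an inner flow. So the first step is to write $\omega(v,f)$ as the sum of the contribution of $a(0)$ and the contribution of $a(k)$ at $v$, analogous to the two separate endpoint computations in Lemma~\ref{lem:path}.

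\medskip

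More precisely, from the computation in the proof of Lemma~\ref{lem:path} the contribution of $a(0)$ to $\omega(v,f)$ is $-W_0\lambda_{a(0)}^{\max(0,W_0)}f_{a(0)}$, while the contribution of $a(k)$ is $W_k\lambda_{a(k)}^{-\min(0,W_k)}f_{a(k)}$, where now $W=C$ as a signed walk and $v$ is simultaneously the head/tail determined by $a(0)$ and $a(k)$. Adding these and substituting the expression for $f_{a(k)}$ from Lemma~\ref{lem:walk}, namely $f_{a(k)}=K\lambda(C)^{-1}f_{a(0)}$ with $K=C_0C_k\lambda_{a(0)}^{\max(0,C_0)}\lambda_{a(k)}^{\min(0,C_k)}$, the $a(k)$-contribution simplifies (exactly as the last display in Lemma~\ref{lem:path}) to $C_0\lambda_{a(0)}^{\max(0,C_0)}\lambda(C)^{-1}f_{a(0)}$. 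So I would obtain
$$
\omega(v,f)=C_0\lambda_{a(0)}^{\max(0,C_0)}f_{a(0)}\bigl(\lambda(C)^{-1}-1\bigr).
$$
Since $\lambda_{a(0)}>0$ and $\sign{f_{a(0)}}=\sign{C_{a(0)}}=C_0$, the prefactor $C_0\lambda_{a(0)}^{\max(0,C_0)}f_{a(0)}$ is strictly positive. Hence $\sign{\omega(v,f)}=\sign{\lambda(C)^{-1}-1}=\sign{1-\lambda(C)}$, which is the claim.

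\medskip

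\textbf{The main obstacle} I anticipate is bookkeeping rather than conceptual: I must be careful that the two incidences of $v$ (as head of $a(0)$ and tail of $a(k)$, or whichever orientation the cyclic labelling dictates) are correctly accounted for, so that $\omega(v,f)$ really receives precisely these two terms and nothing from the flow-conserving interior vertices. The subtlety is that the $\max$/$\min$ exponents and the leading signs $C_0,C_k$ depend on the orientation of the boundary arcs relative to the cycle, and one must verify that the combination collapses cleanly regardless of these signs. This is handled by reusing the two endpoint formulas from Lemma~\ref{lem:path} verbatim, so the sign-case analysis never has to be redone; the one genuine check is that the final common factor does not vanish and carries sign $C_0$, which follows immediately from $\sign{f_{a(0)}}=C_0$. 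The equality $\sign{1-\lambda(C)} = \sign{\lambda(C)^{-1}-1}$ is clear because $\lambda(C)>0$.
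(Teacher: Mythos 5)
Your proposal is correct and takes essentially the same route as the paper: the paper's proof also writes $\omega(v,f)$ as the two boundary contributions $C_{k}\lambda_{a(k)}^{-\min(0,C_{k})}f_{a(k)}-C_{0}\lambda_{a(0)}^{\max(0,C_{0})}f_{a(0)}$, substitutes $f_{a(k)}$ via Lemma~\ref{lem:walk}, factors out $C_{0}\lambda_{a(0)}^{\max(0,C_{0})}f_{a(0)}\bigl(\lambda(C)^{-1}-1\bigr)$, and concludes with $\sign{\lambda(C)^{-1}-1}=\sign{1-\lambda(C)}$. The only cosmetic difference is that you route the endpoint formulas through the proof of Lemma~\ref{lem:path}, whereas the paper reuses the computations of Lemma~\ref{lem:walk} directly.
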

\begin{proof}
Reusing the computations from Lemma~\ref{lem:walk} we obtain
\begin{eqnarray*}
     \omega(v,f)&=&C_{k}\lambda_{a(k)}^{-\min(0,C_{k})}f_{a(k)}
                  -C_{0}\lambda_{a(0)}^{\max(0,C_{0})}f_{a(0)}\\
		&=&C_{0}\lambda_{a(0)}^{\max(0,C_{0})}\lambda(C)^{-1}f_{a(0)}
                  -C_{0}\lambda_{a(0)}^{\max(0,C_{0})}f_{a(0)}\\
                &=&C_{0}\lambda_{a(0)}^{\max(0,C_{0})}f_{a(0)}(\lambda(C)^{-1}-1).
\end{eqnarray*}
Since $\lambda_{a(0)}>0$ and $\sign{f_{a(0)}}=C_{0}$ 
we conclude $\sign{\omega(v,f)}=\sign{\lambda(C)^{-1}-1}$. 
Finally observe that $\sign{\lambda(C)^{-1}-1}=\sign{1-\lambda(C)}$.
\end{proof}

\begin{theorem}\label{thm:csI}
  Given a bicycle or breakeven cycle $H$ of $D_{\Lambda}$, the set of flows $f$
  with $\sign{f}=\pm \sign{H}$ is a 1-dimensional subspace of $\mathcal{F}(D_{\Lambda})$.
\end{theorem}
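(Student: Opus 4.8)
The plan is to identify the set in question with the full space $V_H := \{f \in \mathcal{F}(D_\Lambda) \mid \supp{f} \subseteq A(H)\}$ of generalized flows supported on $H$, and to show this space is one-dimensional and generated by a flow whose signed support is exactly $\sign{H}$. Writing $S$ for the set of flows with $\sign{f} = \pm\sign{H}$ together with $\mathbf{0}$, one has $S \subseteq V_H$ trivially; conversely, once a generator $g$ of a one-dimensional $V_H$ is produced with $\sign{g} = \sign{H}$, every nonzero $tg$ has $\sign{tg} = \pm\sign{H}$, so $S = V_H$ is the asserted one-dimensional subspace. Thus the two things to establish are $\dim V_H = 1$ and the existence of a generator with the prescribed sign pattern.

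For the dimension I would use a rank count. Since $H$ is connected, $\dim V_H = |A(H)| - \mathrm{rank}(N_\Lambda|_{A(H)})$, and the rank is governed by the left kernel: a vector $p$ with $N_\Lambda^\top p = \mathbf{0}$ must satisfy $p_j = \lambda_a p_i$ along every arc $a=(i,j)$, and propagating this around the cycles of $H$ shows the left kernel is nontrivial (and then one-dimensional) precisely when every cycle of $H$ is breakeven. A single cycle has $|A(H)| = |V(H)|$ and is breakeven, so the rank is $|V(H)|-1$ and $\dim V_H = 1$; a bicycle contains the gainy $C$ (or lossy $C'$), so the left kernel is trivial, the rank is $|V(H)|$, and since a bicycle has cycle rank $2$, i.e.\ $|A(H)| - |V(H)| = 1$, again $\dim V_H = 1$.

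For the generator I would build a flow piece by piece. In the breakeven case $H = C$ this is immediate: an inner flow of $C$ with $\sign{f} = \sign{C}$ exists and is unique up to scaling by Lemma~\ref{lem:walk}, and by Lemma~\ref{lem:cyc} its excess at the initial vertex has sign $\sign{1 - \lambda(C)} = 0$, so it is conserved there as well; hence it lies in $V_H$, has full support, and $\sign{f} = \sign{C} = \sign{H}$. For a bicycle I would cut $H$ at its branch vertices into the maximal degree-two segments (the three paths of the theta-graph when $C \cap C'$ is a nontrivial interval, the two cycles when they meet in a single vertex, or $C$, $W$, $C'$ in the handcuffs case), place a normalized inner flow oriented according to $\sign{H}$ on each segment, and scale them by factors $t_1, t_2, \dots$. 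Generalized flow-conservation holds automatically along each segment by the inner-flow property, so only the branch vertices impose constraints; by the segment-versus-branch count these are exactly one fewer equation than unknowns, matching $\dim V_H = 1$.

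The heart of the matter, and the step I expect to be the main obstacle, is to verify that these branch-vertex equations admit a strictly positive solution, so that the resulting flow really has signed support $\sign{H}$ (and full support, since each inner flow is nowhere zero on its segment). Here Lemmas~\ref{lem:cyc} and~\ref{lem:path} supply the signs of every contributing excess term: a cycle-segment contributes an excess of sign $\sign{1-\lambda(C)}$ at its branch vertex, while a path-segment contributes a negative excess at its tail and a positive one at its head. The content of the hypotheses is that $C$ is gainy and $C'$ is lossy — and, in the theta case, that the third cycle $C \triangle C'$ is not breakeven (Lemma~\ref{lem:incomp}) — which is exactly what makes the excess terms meeting at each branch vertex have opposite signs, so that the conservation equations can be balanced with all $t_i > 0$. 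Carrying out this sign bookkeeping in each of the three geometric sub-cases, and checking that the branch-vertex equations are independent (equivalently, that the left kernel is trivial, as in the rank count), is the real work; it both produces the desired generator and pins down the orientation of the connecting path $W$ relative to $C$ and $C'$.
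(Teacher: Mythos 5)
Your overall architecture matches the paper's---bound the dimension by one, then exhibit a single flow whose signed support is exactly $\sign{H}$---but your dimension argument is genuinely different and is in fact the stronger half of the proposal. The paper gets the bound from Lemma~\ref{lem:walk} by regarding $H$ as a walk and invoking uniqueness of inner flows; this is delicate, because a generalized flow $f$ with $\sign{f}=\pm\sign{H}$ is only known to satisfy \emph{total} conservation at the branch vertices of a bicycle, not the pairwise conservation between consecutive arcs that the inner-flow notion demands (on a theta graph the actual generator violates the pairwise condition at the two degree-three vertices). Your rank count avoids this entirely: the left kernel of the column submatrix $N_\Lambda|_{A(H)}$ is controlled by the propagation $p_j=\lambda_a p_i$, hence is one-dimensional exactly when every cycle of $H$ is breakeven and trivial otherwise, and the counts $|A(H)|=|V(H)|$ for a cycle, respectively $|A(H)|=|V(H)|+1$ for a bicycle, give $\dim V_H=1$ in both cases. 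This yields the stronger statement that \emph{all} flows supported inside $H$ form a line, which is what makes your reduction $S=V_H$ clean.

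The gap is in the second half, exactly where you park it, and it is not mere bookkeeping. First, the sign claim you lean on is false under the paper's stated conventions: at the junction $v$ of the gainy cycle $C$ with the tail of $W$ (the paper orients $W$ from $C$ to $C'$), Lemma~\ref{lem:cyc} gives the cycle excess the sign of $1-\lambda(C)<0$, and Lemma~\ref{lem:path} gives the path-tail excess a negative sign as well---the \emph{same} sign, not opposite. Balancing therefore forces the flow on $W$ to run against $\sign{W}$: the connecting path inside $\sign{H}$ must point from the lossy toward the gainy cycle. You half-anticipate this by saying the bookkeeping ``pins down the orientation of $W$,'' but a complete proof has to confront it explicitly (the paper's own proof glosses over it by asserting that a gainy cycle has positive excess at $v$, which contradicts Lemma~\ref{lem:cyc} as stated). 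Second, in the theta case your criterion ``both signs occur at each branch vertex'' is necessary but not sufficient: there every segment meets both branch vertices, so you get two coupled balance equations in which all three unknowns $t_1,t_2,t_3$ appear, and a simultaneous strictly positive solution requires the quantitative input $\lambda(C)>1>\lambda(C')$ (a short computation comparing tail and head excesses of the three segments), or else the paper's trick of basing the inner flows of \emph{both} cycles at the same shared vertex $v$, whence conservation at the other shared vertex holds automatically by linearity and only one mixed-sign equation remains. Since this verification is precisely what you defer as ``the real work,'' the proposal is a sound plan with a correct and genuinely different first half, but not yet a proof.
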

\begin{proof}
  Given $H\in\underline{\mathcal{C}}(D_{\Lambda})$ we want to characterize
  those $f\in\mathcal{F}(D_{\Lambda})$ with $\sign{f}=\pm \sign{H}$. Lemma~\ref{lem:walk}
  implies that the dimension of the inner flows of $H$ is at most one.
  Hence, it is enough to identify a single nontrivial flow on $H$.

  If $H=C\in\underline{\mathcal{C}}(D_{\Lambda})$ is a breakeven
  cycle, which traverses the arcs $(a(0),\ldots, a(k))$ starting and
  ending at vertex $v$.  By Lemma~\ref{lem:cyc} we have
  $\sign{\omega(v,f)}=\sign{1-\lambda(C)}$. Since $C$ is breakeven
  $\lambda(C)=1$, this implies generalized flow-conservation in $v$.
  Since by definition generalized flow-conservation holds for all
  other vertices we may conclude that $f$ is a generalized flow, i.e.,
  a nontrivial flow on $H$.

  Let $H\in\underline{\mathcal{C}}(D_{\Lambda})$ be a bicycle which
  traverses the arcs $(a(0),\ldots, a(k))$ such that $C=(a(0),\ldots, a(i)),
  W=(a(i+1),\ldots, a(j-1))$ and $C'=(a(j),\ldots, a(k))$. Let $v$ and $v'$ be
  the common vertices of $C$ and $W$ and $W$ and $C'$, respectively.

  Consider the case where $W$ is non-trivial. We construct
  $f\in\mathcal{F}(D_{\Lambda})$ with $\sign{f}=H$. First take any
  inner flow $f_C$ of $C$ with $\sign{f_C}=\sign{C}$. Since $C$ is
  gainy Lemma~\ref{lem:cyc} implies a positive excess at $v$. Let
  $f_W$ be an inner flow of $W$ with $\sign{f_W}=\sign{W}$.
  Lemma~\ref{lem:path} ensures $\omega(v,f_W)<0$. By scaling $f_W$
  with a positive scalar we can achieve $\omega(v,f_C+f_W)=0$. From
  Lemma~\ref{lem:path} we know that $f_C+f_W$ has positive excess at
  $v'$. Since $C'$ is lossy any inner flow $f_{C'}$ of $C'$ has
  negative excess at $v'$ (Lemma~\ref{lem:cyc}). Hence we can scale
  $f_{C'}$ to achieve $\omega(v',f_{C'}+f_W)=0$.  Together we have
  obtained  a generalized flow $f:=f_C+f_W+f_{C'}$, i.e.,
  a nontrivial flow on $H$.

  If $W$ is empty $v$ and $v'$ coincide. As in the above construction
  we can scale flows on $C$ and $C'$ such that $\omega(v,f) = 0$ holds
  for $f:=f_C+f_{C'}$, i.e., $f$ is a generalized flow.  If $C$ and $C'$ 
  share an interval the sign vectors of $C$
  and $C'$ coincide on this interval. From $\sign{f_C}=\sign{C}$
  and $\sign{f_C'}=\sign{C'}$ it follows that 
  $\sign{f}=\sign{C\cup C'}=\sign{H}$.  
  Hence $f$ is a flow on $H$.
\end{proof}

\begin{theorem}\label{thm:csII}
  For an arc parameterized digraph $D_{\Lambda}$ the set of supports
  of generalized cycles, i.e., of support minimal flows, coincides
  with the set of bicycles and breakeven cycles. Stated more formally:
  $\sign{\mathcal{C}(D_{\Lambda})}=\underline{\mathcal{C}}(D_{\Lambda})$.
\end{theorem}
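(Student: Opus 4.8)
The plan is to prove the two inclusions $\underline{\mathcal{C}}(D_{\Lambda})\subseteq\sign{\mathcal{C}(D_{\Lambda})}$ and $\sign{\mathcal{C}(D_{\Lambda})}\subseteq\underline{\mathcal{C}}(D_{\Lambda})$ separately, using Theorem~\ref{thm:csI} as the main engine together with one elementary observation: the support $S$ of any nonzero generalized flow, viewed as a subgraph of $D$, has minimum degree at least $2$. Indeed, a vertex meeting $S$ in a single arc would make the generalized flow-conservation $\omega(v,f)=0$ collapse to a single term, forcing that arc-value to vanish.

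For the inclusion ``$\underline{\mathcal{C}}(D_{\Lambda})\subseteq\sign{\mathcal{C}(D_{\Lambda})}$'', I fix $H\in\underline{\mathcal{C}}(D_{\Lambda})$. Theorem~\ref{thm:csI} supplies a flow $f$ with $\sign{f}=\sign{H}$, hence $\supp{f}=\supp{H}$, and I must check that $f$ is support-minimal. Suppose a nonzero flow $g$ had $\supp{g}\subsetneq\supp{H}$; then $\supp{g}$ has minimum degree at least $2$ and sits properly inside $H$. If $H$ is a breakeven cycle this is impossible, as a proper subgraph of a cycle has a vertex of degree $1$. If $H$ is a bicycle $C\cup W\cup C'$, then every proper subgraph of minimum degree $\geq 2$ is a single cycle (namely $C$, $C'$, or the symmetric-difference cycle) or the disjoint union $C\cup C'$; by Lemma~\ref{lem:incomp} none of the cycles involved is breakeven, so by Lemma~\ref{lem:cyc} none carries a nonzero flow, ruling out $g$. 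Thus $\supp{H}$ is support-minimal and $\sign{H}\in\sign{\mathcal{C}(D_{\Lambda})}$.

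For the reverse inclusion, let $f$ be a generalized cycle with $S:=\supp{f}$. Minimality forces $S$ to be connected (otherwise the restriction to one component is a nonzero flow of smaller support) and forces the flows supported inside $S$ to form a one-dimensional space (otherwise a suitable combination has strictly smaller support). Since $S$ has minimum degree $\geq 2$ it contains a cycle. If some cycle $C\subseteq S$ is breakeven, then $C$ carries a flow by Lemma~\ref{lem:cyc}, and minimality gives $S=C$, a breakeven cycle. Otherwise no cycle of $S$ is breakeven; then $S$ is not a single cycle (a lone non-breakeven cycle carries no flow by Lemma~\ref{lem:cyc}), so $S$ contains two independent cycles and hence a theta-, figure-eight- or dumbbell-shaped subgraph, from which I extract a bicycle $H\subseteq S$. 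Theorem~\ref{thm:csI} then gives a flow $g$ supported on $H\subseteq S$, and minimality collapses $\supp{g}=S$; by one-dimensionality $\sign{f}=\pm\sign{g}=\pm\sign{H}$, so $\sign{f}\in\underline{\mathcal{C}}(D_{\Lambda})$.

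The main obstacle is this last extraction step, because the definition of a bicycle couples the cyclic orientations of $C$ and $C'$ on their shared arcs. When the two chosen cycles meet only in a vertex or are joined by a nontrivial path $W$ (the figure-eight and dumbbell cases) the orientations are free, so I orient one cycle gainy and the other lossy; the existence of the full-support flow $f$ then forces exactly this gainy/lossy pattern, since the excess signs at the two junctions supplied by Lemma~\ref{lem:cyc} must cancel against the opposite-signed path excesses of Lemma~\ref{lem:path} (so a both-gainy or both-lossy pair cannot support a flow). When the two cycles share a whole interval (the theta case) the orientations are constrained; here I order the multipliers of the three internally disjoint paths and take the shared interval to be the path of median multiplier, which renders one resulting cycle gainy and the other lossy while keeping them coherently oriented on the common arcs. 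Checking these sign-bookkeeping details is routine given Lemmas~\ref{lem:cyc} and~\ref{lem:path}.
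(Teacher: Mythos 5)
Your proof is correct and takes essentially the same route as the paper's: both inclusions rest on Theorem~\ref{thm:csI}, the minimum-degree-$2$ and connectivity observations, Lemmas~\ref{lem:incomp} and~\ref{lem:cyc} to exclude proper subflows, and the same two-cycle case analysis (disjoint, single shared vertex, theta) to extract a bicycle inside the support of a support-minimal flow. The only real difference is cosmetic: in the theta case the paper fixes the pair $C=B_1\cup B_2$, $C'=B_2\cup B_3$ and, if both are gainy, repairs it by passing to $B_1\cup B_3^{-1}$ or $B_1^{-1}\cup B_3$, whereas you choose the shared interval to be the path of median multiplier (well-defined since the no-breakeven-cycle assumption makes the three multipliers distinct) --- two equivalent ways of producing the same bicycle.
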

\begin{proof}
  By Theorem~\ref{thm:csI} every $H\in\underline{\mathcal{C}}(D_{\Lambda})$
  admits a generalized flow $f$. To see support-minimality of $f$, assume that
  $H\in\underline{\mathcal{C}}(D_{\Lambda})$ has a strict subset $S$ which is
  support-minimal admitting a generalized flow. Clearly $S$ cannot have
  vertices of degree $1$ to admit a flow and must be connected to be
  support-minimal. Since $S \subset H\in\underline{\mathcal{C}}(D_{\Lambda})$ 
  this implies that is a cycle. Lemma~\ref{lem:cyc}
  ensures that $S$ must be a breakeven cycle. If
  $H$ was a breakeven cycle itself it cannot strictly contain $S$. Otherwise 
  if $H= C\cup W\cup C'$ is a bicycle then by Lemma~\ref{lem:incomp} it 
  contains no breakeven cycle.

  For the converse consider any $S\in\sign{\mathcal{C}(D_{\Lambda})}$,
  i.e., the signed support of some flow $f$.  We claim that 
  $\underline{S}:=\supp{f}$ contains a
  breakeven cycle or a bicycle. If it contains a breakeven cycle we
  are done, so we assume that it does not. Under this assumption it
  follows that there are two cycles $C_1,C_2$ in a connected component
  of $\underline{S}$.  If $C_1$ and $C_2$ intersect in at most one vertex, then we
  can choose the orientations for these cycles such that $\lambda(C_1)
  > 1$ and $\lambda(C_2) < 1$. If $C_1\cap C_2=\emptyset$ let $W$ be
  an oriented path from $C_1$ to $C_2$. Now $C_1\cup W \cup C_2$ is a
  bicycle contained in $\underline{S}$. The final case is that $C_1$ and $C_2$
  share several vertices. Let $B$ be a bow of $C_2$ over $C_1$, i.e, a
  consecutive piece of $C_2$ that intersects $C_1$ in its two
  endpoints $v$ and $w$ only. The union of $C_1$ and $B$ is a
  theta-graph, i.e, it consists of three disjoint path $B_1,B_2,B_3$ joining $v$
  and $w$, see Fig.~\ref{fig:theta}. Let the path $B_i$ be
oriented as shown in
  the figure and let $C = B_1 \cup B_2$ and $C' = B_2 \cup B_3$. If $C\cup C'$ is
  not a bicycle then the cycles are either both gainy or both lossy. 
  Assume that they are both gainy, i.e., $\lambda(C)  > 1$ and
  $\lambda(C')  > 1$. Consider the cycles $E = B_1 \cup B_3^{-1}$
  and $E' = B_1^{-1} \cup B_3$, since $\lambda(E) = \lambda(B_1)\lambda(B_3)^{-1} =
  \lambda(E')^{-1}$ it follows that either $E$ or $E'$ is a lossy cycle.
  The orientation of $E$ is consistent with $C$ and the orientation of $E'$ is consistent
  with $C'$. Hence either $C\cup E$ or $C'\cup E'$ is a bicycle contained in $\underline{S}$.
  This contradicts the support-minimality of $f$.
\end{proof}

   \calc_figscale{42}
    \begin{figure}[htb]
    \centerline{\input{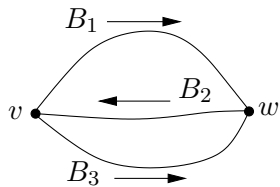}}
    \caption{A theta graph and an orientation of the three path.\label{fig:theta}}
    \end{figure}
    

For $H\in\underline{\mathcal{C}}(D_{\Lambda})$ we define $f(H)$ as the unique
$f\in\mathcal{C}(D)$ with $\sign{f(H)}=\sign{H}$ and $\Vert f(H)\Vert=1$. Let $x\in
\mathbb{R}^{m}$ and $H\in\underline{\mathcal{C}}(D_{\Lambda})$. Denote by
$\delta(H,x):=\langle x,f(H)\rangle$ the \term{bicircular balance} of
$x$ on $H$.

\begin{theorem}\label{thm:genflowdiff}
  Let $D_{\Lambda}$ be an arc-parameterized digraph and $x,c\in
  \mathbb{R}^{m}$. Then $x\in\mathcal{B}(D_{\Lambda})_{\leq c}$ if and only if
	\Item(1) \quad $x_a\leq c_a$ for all $a\in A$.
             \hfill \rm{(capacity constraints)}

	\Item(2) \quad $\delta(H,x)=0$ for all $H\in\underline{\mathcal{C}}(D_{\Lambda})$.
             \hfill \rm{(bicircular balance conditions)}
\end{theorem}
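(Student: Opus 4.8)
The plan is to prove both directions of the equivalence, using the characterization $\mathcal{B}(D_{\Lambda})_{\leq c}=\{x\in\mathbb{R}^m\mid x\leq c\text{ and }x\in\mathrm{Im}(N_{\Lambda}^{\top})\}$ together with the identity $\mathrm{Im}(N_{\Lambda}^{\top})=\mathrm{Ker}(N_{\Lambda})^{\bot}=\mathcal{F}(D_{\Lambda})^{\bot}$. The capacity constraint (1) is literally the condition $x\leq c$, so it carries over verbatim in both directions and requires no work. All the content is therefore in showing that $x\in\mathcal{F}(D_{\Lambda})^{\bot}$ is equivalent to the bicircular balance conditions (2).

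First I would observe that $x\in\mathcal{F}(D_{\Lambda})^{\bot}$ means $\langle x,f\rangle=0$ for every generalized flow $f\in\mathcal{F}(D_{\Lambda})$. Since $\mathcal{C}(D_{\Lambda})$, the set of support-minimal flows, spans $\mathcal{F}(D_{\Lambda})$ (as already noted in the text), orthogonality to all of $\mathcal{F}(D_{\Lambda})$ is equivalent to orthogonality to every generalized cycle $f\in\mathcal{C}(D_{\Lambda})$. Next I would invoke Theorem~\ref{thm:csII}, which tells us that $\sign{\mathcal{C}(D_{\Lambda})}=\underline{\mathcal{C}}(D_{\Lambda})$, i.e., the generalized cycles are exactly the flows supported on bicycles and breakeven cycles. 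By Theorem~\ref{thm:csI}, for each $H\in\underline{\mathcal{C}}(D_{\Lambda})$ the space of flows with that signed support is one-dimensional, so every generalized cycle is a nonzero scalar multiple of the normalized representative $f(H)$. Therefore $\langle x,f\rangle=0$ for all $f\in\mathcal{C}(D_{\Lambda})$ if and only if $\langle x,f(H)\rangle=0$ for all $H\in\underline{\mathcal{C}}(D_{\Lambda})$, which by definition of $\delta(H,x)=\langle x,f(H)\rangle$ is precisely condition (2).

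Assembling the pieces: $x\in\mathcal{B}(D_{\Lambda})_{\leq c}$ holds if and only if $x\leq c$ and $x\in\mathrm{Im}(N_{\Lambda}^{\top})$; the first conjunct is (1) and the second, via the orthogonal-complement description and the two theorems above, is (2). The main obstacle is not in this final gluing argument, which is essentially bookkeeping, but in making sure the reduction from ``orthogonal to all of $\mathcal{F}(D_{\Lambda})$'' down to ``orthogonal to each normalized $f(H)$'' is airtight — specifically that every generalized cycle really is captured by some $H\in\underline{\mathcal{C}}(D_{\Lambda})$ and that $f(H)$ indeed spans the corresponding one-dimensional space. Both of these facts are exactly the content of Theorems~\ref{thm:csI} and~\ref{thm:csII}, so the heavy lifting has already been done; here I would only need to cite them and note that orthogonality is insensitive to the choice of scalar representative within each one-dimensional family.
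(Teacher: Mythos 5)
Your proof is correct and follows exactly the route the paper intends: indeed, the paper states Theorem~\ref{thm:genflowdiff} without a separate proof, since it is precisely the assembly of $\mathcal{B}(D_{\Lambda})=\mathrm{Im}(N_{\Lambda}^{\top})=\mathcal{F}(D_{\Lambda})^{\bot}$, the spanning property of $\mathcal{C}(D_{\Lambda})$, and Theorems~\ref{thm:csI} and~\ref{thm:csII} that you spell out. Your additional care about replacing each generalized cycle by its normalized representative $f(H)$ is exactly the right (and only) bookkeeping needed.
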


The theorem helps explain the name \textit{generalized bonds}:
usually a cocycle or bond is a set~$B$ of edges such that for every
cycle $C$ the incidence vectors are orthogonal, i.e., $\langle
x_B,x_C\rangle =0$. In our context the role of cycles is played by
generalized cycles, i.e., by generalized flows $f$ with
$\sign{f}=\sign{H}$ for some $H\in\underline{\mathcal{C}}(D_{\Lambda})$.

To make the statement of the theorem more general let $D_{\Lambda}$ be
an arc-parameterized digraph with arc capacities $c\in \mathbb{R}^{m}$
and a number $\Delta_H$ for each
$H\in\underline{\mathcal{C}}(D_{\Lambda})$.  A map $x: A \to
\mathbb{R}$ is called a \term{generalized $\Delta$-bond} if

\Item(B$_1$) \quad $x(a)\leq c_{u}(a)$ for all $a\in A$.
             \hfill (capacity constraints)

\Item(B$_2$) \quad $\delta(H,x)=\Delta_H$ for all $H\in\underline{\mathcal{C}}(D_{\Lambda})$.
             \hfill (bicircular $\Delta$-balance conditions)

\medskip

\ni Denote by $\mathcal{B}_{\Delta}(D_{\Lambda})_{\leq c}$ the set of
generalized $\Delta$-bonds of $D_{\Lambda}$. An argument as in the proof
of Lemma~\ref{lem:trans} yields the real-valued generalization of
Theorem~\ref{thm:bond} as a corollary of Theorem~\ref{thm:genbond}.

\begin{theorem}\label{thm:gendelta}
  Let $D_{\Lambda}$ be an arc-parameterized digraph with capacities
  $c\in \mathbb{R}^{m}$ and
  $\Delta\in\mathbb{R}^{\underline{\mathcal{C}}(D_{\Lambda})}$. The
  set $\mathcal{B}_{\Delta}(D_{\Lambda})_{\leq c}$ of generalized
  $\Delta$-bonds carries the structure of a distributive lattice and
  forms a polyhedron.
\end{theorem}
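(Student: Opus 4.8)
The cleanest route is to reduce the statement to Theorem~\ref{thm:genbond}, which already gives a distributive lattice structure on the ordinary generalized bonds $\mathcal{B}(D_{\Lambda})_{\leq c}$. By Theorem~\ref{thm:genflowdiff}, that set is exactly the set of $x$ satisfying the capacity constraints $x_a \leq c_a$ together with the \emph{homogeneous} bicircular balance conditions $\delta(H,x)=0$ for all $H\in\underline{\mathcal{C}}(D_{\Lambda})$. So the task is only to absorb the inhomogeneous target values $\Delta_H$ into the capacities via a translation, mirroring the proof of Lemma~\ref{lem:trans}. First I would exhibit, for the given $\Delta\in\mathbb{R}^{\underline{\mathcal{C}}(D_{\Lambda})}$, some fixed point $x_0\in\mathbb{R}^m$ with $\delta(H,x_0)=\Delta_H$ for every $H$; this is possible precisely because the generalized bonds $\mathcal{B}(D_{\Lambda})=\mathrm{Im}(N_{\Lambda}^{\top})$ are the orthogonal complement of $\mathcal{F}(D_{\Lambda})=\mathrm{Ker}(N_{\Lambda})$, so the linear functionals $x\mapsto\langle x,f(H)\rangle$ see only the component of $x$ in that complement, and any prescribed values on a spanning set of $\mathcal{F}(D_{\Lambda})$ that are \emph{consistent} (i.e.\ respect the linear dependencies among the $f(H)$) are realizable. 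The translation map $x\mapsto x - x_0$ then carries $\mathcal{B}_{\Delta}(D_{\Lambda})_{\leq c}$ bijectively onto $\mathcal{B}_{\mathbf{0}}(D_{\Lambda})_{\leq c'}$, where $c'_a := c_a - (x_0)_a$, because $\delta(H,x)=\Delta_H$ becomes $\delta(H,x-x_0)=0$ and $x_a\leq c_a$ becomes $(x-x_0)_a\leq c'_a$.

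\smallskip

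\noindent Concretely, the steps in order are: (i) note $\mathcal{B}_{\mathbf{0}}(D_{\Lambda})_{\leq c'}=\mathcal{B}(D_{\Lambda})_{\leq c'}$ in the notation of Theorem~\ref{thm:genflowdiff}, so Theorem~\ref{thm:genbond} endows it with a distributive lattice structure and exhibits it as (the image under an isomorphism of) a D-polyhedron; (ii) verify that translation by a fixed vector is a lattice isomorphism for the dominance order, since componentwise $\max$ and $\min$ commute with adding a constant to each coordinate, i.e.\ $\max(x,y)-x_0=\max(x-x_0,y-x_0)$ and likewise for $\min$; (iii) conclude that $\mathcal{B}_{\Delta}(D_{\Lambda})_{\leq c}$, being the translate of a distributive lattice by $-x_0$ run backwards, is itself a distributive lattice. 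For the polyhedron claim, $\mathcal{B}_{\Delta}(D_{\Lambda})_{\leq c}$ is cut out of $\mathrm{Im}(N_{\Lambda}^{\top})$—an affine subspace after imposing the $\Delta_H$—by finitely many linear inequalities $x_a\leq c_a$, hence is a polyhedron; equivalently it is the affine image $x_0+\mathcal{B}(D_{\Lambda})_{\leq c'}$ of a polyhedron.

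\smallskip

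\noindent \textbf{The main obstacle.} The one genuinely non-routine point is establishing existence of the base point $x_0$ realizing an \emph{arbitrary} $\Delta\in\mathbb{R}^{\underline{\mathcal{C}}(D_{\Lambda})}$. The functionals $f(H)$ for $H\in\underline{\mathcal{C}}(D_{\Lambda})$ span $\mathcal{F}(D_{\Lambda})$ but are generally far from independent, so not every assignment $H\mapsto\Delta_H$ is consistent; the values must respect the linear relations among the generalized cycles. I would handle this by observing that the honest parameter space is the quotient $\mathbb{R}^{\underline{\mathcal{C}}(D_{\Lambda})}$ modulo these relations, which is canonically $\mathcal{F}(D_{\Lambda})^{*}\cong\mathrm{Im}(N_{\Lambda}^{\top})$; thus a consistent $\Delta$ determines a unique point $x_0\in\mathrm{Im}(N_{\Lambda}^{\top})$ with $\langle x_0,f(H)\rangle=\Delta_H$, and one restricts attention (as Lemma~\ref{lem:trans} tacitly does, by fixing a spanning tree) to such $\Delta$. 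This is exactly the analogue of the $\Delta_C$ being forced to satisfy the cycle-space relations in the classical $\Delta$-bond setting, and the spanning-tree device there becomes, in the parameterized case, the choice of a basis of $\mathcal{F}(D_{\Lambda})$ drawn from the generalized cycles. Everything else is the same bookkeeping as in Lemma~\ref{lem:trans}.
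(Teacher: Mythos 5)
Your overall strategy is exactly the paper's: the paper disposes of Theorem~\ref{thm:gendelta} in one line, saying that ``an argument as in the proof of Lemma~\ref{lem:trans}'' yields it as a corollary of Theorem~\ref{thm:genbond}, and your translation-by-a-base-point argument combined with Theorem~\ref{thm:genflowdiff} is the intended expansion of that reduction. However, the step you yourself flag as the main obstacle is handled incorrectly as written. You claim that a consistent $\Delta$ determines a unique point $x_0\in\mathrm{Im}(N_{\Lambda}^{\top})$ with $\langle x_0,f(H)\rangle=\Delta_H$, and earlier that the functionals $x\mapsto\langle x,f(H)\rangle$ ``see only the component of $x$'' in that space. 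This is backwards: every $f(H)$ lies in $\mathcal{F}(D_{\Lambda})=\mathrm{Ker}(N_{\Lambda})$ and $\mathrm{Im}(N_{\Lambda}^{\top})=\mathrm{Ker}(N_{\Lambda})^{\bot}$, so for any $x_0=N_{\Lambda}^{\top}p$ one has $\langle x_0,f(H)\rangle=\langle p,N_{\Lambda}f(H)\rangle=0$. Points of $\mathrm{Im}(N_{\Lambda}^{\top})$ can therefore realize only $\Delta=\mathbf{0}$ --- that is precisely the content of Theorem~\ref{thm:genflowdiff}, so your base point, as located, cannot exist unless $\Delta=\mathbf{0}$. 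The claimed identification $\mathcal{F}(D_{\Lambda})^{*}\cong\mathrm{Im}(N_{\Lambda}^{\top})$ is also false in general, already for dimension reasons: $\dim\mathcal{F}(D_{\Lambda})^{*}=m-\mathrm{rank}(N_{\Lambda})$ while $\dim\mathrm{Im}(N_{\Lambda}^{\top})=\mathrm{rank}(N_{\Lambda})$.

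The fix is to swap the two spaces, after which your argument goes through. The functionals $\langle\,\cdot\,,f(H)\rangle$ kill $\mathrm{Im}(N_{\Lambda}^{\top})$ and see only the component of $x$ in $\mathcal{F}(D_{\Lambda})$; since the $f(H)$ span $\mathcal{F}(D_{\Lambda})$ and the inner product is non-degenerate on it, the map $\mathbb{R}^{m}\to\mathcal{F}(D_{\Lambda})^{*}$ given by $x\mapsto\langle x,\cdot\rangle$ restricted to $\mathcal{F}(D_{\Lambda})$ is surjective with kernel $\mathrm{Im}(N_{\Lambda}^{\top})$. Hence every consistent $\Delta$ is realized by some $x_0$, unique if you insist $x_0\in\mathcal{F}(D_{\Lambda})$ (the kernel, not the image). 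With that $x_0$, your translation $x\mapsto x-x_0$, the identity $\mathcal{B}_{\mathbf{0}}(D_{\Lambda})_{\leq c'}=\mathcal{B}(D_{\Lambda})_{\leq c'}$ from Theorem~\ref{thm:genflowdiff}, and Theorem~\ref{thm:genbond} give the lattice and the polyhedron; for inconsistent $\Delta$ the set is empty and the statement is vacuous. Two smaller remarks: your step (ii) is not needed (and slightly confused), because the lattice structure on $\mathcal{B}(D_{\Lambda})_{\leq c'}$ is not the dominance order but is transported from the potential D-polyhedron through a bijection, so composing with any bijection --- in particular your translation --- transports it further; and the paper's spanning-tree device in Lemma~\ref{lem:trans} is exactly a concrete way of producing such an $x_0$, which you could also imitate here by prescribing values on a basis of $\mathcal{F}(D_{\Lambda})$ chosen among the $f(H)$.
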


\section{Planar Generalized Flow}\label{sec:plan}

The \term{planar dual} $D^{*}$ of a non-crossing embedding of a planar
digraph $D$ in the sphere is an orientation of the planar dual $G^{*}$ of the
underlying graph $G$ of $D$: Orient an edge $\{v,w\}$ of $G^{*}$ from $v$ to
$w$ if it appears as a backward arc in the clockwise facial cycle of $D$ dual
to $v$. Call an arc-parameterized digraph $D_{\Lambda}$ \term{breakeven} if all its
cycles are breakeven.

\def\LambdaSt{\Lambda\kern-1pt^*}
\def\lambdaSt{\lambda\kern-1pt^*}

\begin{theorem}\label{thm:plan}
  Let $D_{\Lambda}$ be a planar breakeven digraph. There is an arc
  parameterization ${\LambdaSt}$ of the dual $D^*$ of $D$ such that
  $\mathcal{F}(D_{\Lambda}) \cong \mathcal{B}(D^*_{\LambdaSt})$. More
  precisely, there is a vector $\sigma \in \mathbb{R}^m$ with positive
  components such that $f$ is a generalized flow of $D_{\Lambda}$ if and only if
  $x = S(\sigma) f$ is a generalized bond of $D^*_{\LambdaSt}$, where
  $S(\sigma)$ denotes the diagonal matrix with entries from $\sigma$.
\end{theorem}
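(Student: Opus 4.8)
The plan is to reduce the generalized, breakeven situation to the classical planar flow--bond duality by a positive diagonal change of variables, and then simply read off $\Lambda^*$ and $\sigma$. Throughout we may assume $D$ is connected (for a disjoint union everything splits as a product over components, as elsewhere in the paper) and loop-free: a loop carries parameter $0$, which the potential argument below cannot accommodate, and such arcs are trivial to treat separately.

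First I would use the breakeven hypothesis to trivialize the parameters by a vertex potential. Writing $\ell_a:=\log\lambda_a$, the condition $\lambda(C)=1$ for every cycle $C$ says precisely that the signed sum of $\ell$ around every cycle vanishes; since $D$ is connected, $\ell$ is then a coboundary, so there is $\psi\colon V\to\mathbb{R}$ with $\ell_a=\psi_j-\psi_i$, equivalently a positive vertex weight $\pi:=e^{\psi}$ with $\lambda_a=\pi_j/\pi_i$ for every arc $a=(i,j)$. Now set $\sigma_a:=\pi_j$ for $a=(i,j)$ and $g:=S(\sigma)f$. Multiplying the generalized conservation law $\omega(j,f)=0$ by $\pi_j$ turns each out-term $\pi_j\lambda_{a}f_{a}=\pi_k f_a$ and each in-term $\pi_j f_a$ into $g_a$, so the law becomes $\sum_{a=(i,j)}g_a-\sum_{a=(j,k)}g_a=0$, i.e.\ ordinary flow-conservation of $g$ on $D$. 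Hence $S(\sigma)$, being a positive diagonal (thus invertible) map, is a linear isomorphism carrying $\mathcal{F}(D_\Lambda)$ onto the ordinary flow space $\mathrm{Ker}(N)$ of $D$.

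Next I would invoke planar duality. With the stated orientation rule for $D^*$ (an edge is oriented according to where it is a backward arc in a clockwise facial cycle), the cycle/flow space $\mathrm{Ker}(N)$ of $D$ coincides, as a subspace of $\mathbb{R}^m$ under the edge/dual-edge identification, with the cut/bond space $\mathrm{Im}\bigl((N^{*})^{\top}\bigr)$ of $D^*$, and the latter is exactly $\mathcal{B}(D^*_{\Lambda^*})$ for the parameterization $\Lambda^*=\mathbf{1}$. Chaining the two identifications yields the desired biconditional with $\sigma$ as above: $f\in\mathcal{F}(D_\Lambda)\iff g=S(\sigma)f\in\mathrm{Ker}(N)\iff x=S(\sigma)f\in\mathcal{B}(D^*_{\Lambda^*})$. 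The existence statement only requires \emph{some} valid $\Lambda^*$, i.e.\ a nonnegative vector that is zero only on loops, and $\mathbf{1}$ qualifies; if one instead wants a dual that visibly reflects $\Lambda$ (so that the application genuinely exercises a non-breakeven dual rather than ordinary bonds), one transports $\Lambda$ across the duality arc by arc and absorbs the residual factors into $\sigma$.

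The step I expect to be the main obstacle is the sign-and-dimension bookkeeping in the duality identification. One must verify that the specific orientation rule for $D^*$ is exactly the convention under which the primal cycle space equals the dual cut space, and one must account for the single linear dependency among the facial cycles on the sphere, so that the dimensions $\dim\mathcal{F}(D_\Lambda)=m-n+1$ and $\dim\mathcal{B}(D^*_{\Lambda^*})=|V(D^*)|-1=(m-n+2)-1$ agree. Once the flow--bond duality is pinned down with matching signs, the breakeven reduction makes the remainder routine. Finally, combining this space isomorphism with Theorem~\ref{thm:gendelta} applied to $D^*_{\Lambda^*}$, and using that a positive diagonal map $S(\sigma)$ is an automorphism of the dominance order (hence commutes with componentwise $\min$ and $\max$), transfers the distributive lattice structure to the generalized flows of $D_\Lambda$ within prescribed capacities.
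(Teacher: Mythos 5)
Your proof is correct, and it takes a genuinely different route from the paper's. The paper stays entirely inside the generalized framework: for each clockwise facial cycle $C_i$ it picks a generalized flow $f_i$ with $\sign{f_i}=\sign{C_i}$ (possible because each $C_i$ is breakeven, Lemma~\ref{lem:cyc}), collects these as rows of a matrix $M$, scales the columns so that $N_{\Lambda^*}:=MS(\sigma)$ becomes a generalized network matrix whose underlying digraph is $D^*$, and then deduces $\mathcal{B}(D^*_{\Lambda^*})=S(\sigma)\mathcal{F}(D_{\Lambda})$ from the claim that the rows of $M$ span $\mathcal{F}(D_{\Lambda})$ (a spanning claim the paper partly leaves as an exercise); the resulting $\Lambda^*$ is in general nontrivial. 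You instead spend the breakeven hypothesis once and globally: $\log\Lambda$ is a $\mathbf{0}$-bond (a fact the paper itself records at the end of Section~\ref{sec:plan}), giving $\lambda_a=\pi_j/\pi_i$ for a positive vertex weight $\pi$, and the gauge $\sigma_a=\pi_{\mathrm{head}(a)}$ converts generalized conservation into ordinary flow conservation; classical planar cycle-space/cut-space duality then finishes the job with $\Lambda^*=\mathbf{1}$. The step you flagged as the main obstacle does work out exactly: under the paper's orientation rule for $D^*$, the star vector $(N^*)^{\top}e_v$ of a dual vertex $v$ equals $\signvec{C_v}$ for the clockwise facial cycle $C_v$, so $\mathrm{Im}\bigl((N^*)^{\top}\bigr)$ is precisely the cycle space $\mathrm{Ker}(N)$ of $D$, and the dimensions match. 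Your route is more modular and yields a sharper conclusion -- the dual parameterization can always be taken to be $\mathbf{1}$, so generalized flows of a planar breakeven digraph are just a positive diagonal rescaling of the \emph{ordinary} bonds of $D^*$ -- while the paper's route constructs $\Lambda^*$ explicitly from facial flows without the potential lemma (and one can check its $\Lambda^*$ is again breakeven, i.e., gauge-equivalent to yours, so the two results are consistent). Two harmless inaccuracies on your side: a loop need not carry parameter $0$ (the definition only forces parameter-$0$ arcs to \emph{be} loops, not conversely), and for disconnected $D$ the sphere dual is connected, so the claim does not literally split as a product over components; however, the duality argument goes through directly in that case (with $c$ components, $\dim\mathcal{F}(D_{\Lambda})=m-n+c=n^*-1$), and the paper's own proof silently glosses over the same degeneracies (loops and bridges).
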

\begin{proof}
  Let $C_1\ldots C_{n^*}$ be the list of clockwise oriented facial
  cycles of $D$.  For each $C_i$ let~$f_i$ be a generalized flow with
  $\sign{f_i} = \sign{C_i}$; since $C_i$ is breakeven such an $f_i$ exists by
  Lemma~\ref{lem:cyc}.  Collect the flows $f_i$
  as rows of a matrix $M$. Columns of $M$ correspond to edges of~$D$
  and due to our selection of cycles each column contains exactly two
  nonzero entries. The orientation of the facial cycles and the sign
  condition implies that each column has a positive and a negative
  entry. For the column of arc $a$ let $\mu_a > 0$ and $\nu_a < 0$ be
  the positive and negative entry. Define $\sigma_a := \mu_a^{-1} > 0$
  and note that scaling the column of $a$ with $\sigma_a$ yields entries 
  $1$ and $-\lambdaSt_a = \nu_a\mu_a^{-1} < 0$ in this column.
  Therefore, $N_{\LambdaSt} := M S(\sigma)$ is a generalized network matrix. 
  The construction implies that the underlying digraph of $N_{\LambdaSt}$
  is just the dual $D^*$ of $D$.

  Let $f\in\mathcal{F}(D_{\Lambda})$ be a flow. Then $f$ can be expressed as
  linear combination of generalized cycles. Since $D_{\Lambda}$ is
  breakeven we know that the support of every generalized cycle is a
  simple cycle. The facial cycles generate the cycle space of $D$.
  Moreover, if $C$ is a simple cycle and $f_C$ is a flow with
  $\sign{f_C} = \sign{C}$, then $f_C$ can be expressed as a linear
  combination of the flows $f_i$, $i=1,\ldots,n^*$ (exercise).
  This implies that the rows of $M$ are spanning for $\mathcal{F}(D_{\Lambda})$, i.e., 
  for every $f$ there is a $q\in\mathbb{R}^{n^*}$ such that $f = M^\top q$.
  In other words $\mathcal{F}(D_{\Lambda}) = M^\top \mathbb{R}^{n^*}$.
  
  A vector $x$ is a bond for $N_{\LambdaSt}$ if and only if
  $x$ is in the row space of $N_{\LambdaSt}$,
  i.e., there is a potential $p\in\mathbb{R}^{n^*}$ with $x = N_{\LambdaSt}^\top p$.
  In other words $\mathcal{B}(D^*_{\LambdaSt})=N_{\LambdaSt}^\top\mathbb{R}^{n^*}
  = (M S(\sigma))^\top\mathbb{R}^{n^*} =  S(\sigma) M^\top\mathbb{R}^{n^*}
  = S(\sigma)\mathcal{F}(D_{\Lambda})$.
\end{proof}

\begin{corollary}\label{cor:plan}
  Let $D_{\Lambda}$ be a planar breakeven digraph and $c\in\mathbb{R}^{m}$.
  The set $\mathcal{F}(D_{\Lambda})_{\leq c}$ carries the structure of a
  distributive lattice.
\end{corollary}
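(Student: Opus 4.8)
The plan is to transport the distributive lattice structure on generalized bonds, already established in Theorem~\ref{thm:genbond}, across the diagonal isomorphism supplied by Theorem~\ref{thm:plan}. Since $D_{\Lambda}$ is a planar breakeven digraph, Theorem~\ref{thm:plan} produces a vector $\sigma\in\mathbb{R}^m$ with strictly positive components and an arc-parameterization $\LambdaSt$ of the dual $D^*$ such that $f\mapsto S(\sigma)f$ is a bijection from $\mathcal{F}(D_{\Lambda})$ onto $\mathcal{B}(D^*_{\LambdaSt})$. First I would use this map to translate the capacity constraints. Because every $\sigma_a>0$, the inequality $f_a\leq c_a$ holds if and only if $(S(\sigma)f)_a\leq\sigma_a c_a$; setting $c^*:=S(\sigma)c$ this gives
$$
S(\sigma)\bigl(\mathcal{F}(D_{\Lambda})_{\leq c}\bigr)=\mathcal{B}(D^*_{\LambdaSt})_{\leq c^*},
$$
so $S(\sigma)$ restricts to a bijection between the two capacity-bounded sets.

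Next I would invoke Theorem~\ref{thm:genbond}, applied to the arc-parameterized digraph $D^*_{\LambdaSt}$ with capacity vector $c^*$, to conclude that $\mathcal{B}(D^*_{\LambdaSt})_{\leq c^*}$ carries the structure of a distributive lattice. This is legitimate precisely because Theorem~\ref{thm:plan} guarantees that $N_{\LambdaSt}=MS(\sigma)$ is a genuine generalized network-matrix, so the target is exactly of the form to which Theorem~\ref{thm:genbond} applies. It then remains to pull this structure back along $S(\sigma)$. The key observation is that a diagonal map with strictly positive entries is an automorphism of the dominance order on $\mathbb{R}^m$: scaling coordinate $a$ by $\sigma_a>0$ is monotone, so $S(\sigma)$ is an order-isomorphism and commutes with the componentwise operations, i.e. $S(\sigma)\max(f,f')=\max(S(\sigma)f,S(\sigma)f')$ and likewise for $\min$. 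Hence $S(\sigma)$ is a lattice isomorphism, and the distributive lattice structure on $\mathcal{B}(D^*_{\LambdaSt})_{\leq c^*}$ pulls back to one on $\mathcal{F}(D_{\Lambda})_{\leq c}$.

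I do not expect a real obstacle here, since all the substantive work—identifying generalized flows of $D_{\Lambda}$ with generalized bonds of $D^*_{\LambdaSt}$ and securing the network-matrix form—is already carried out in Theorem~\ref{thm:plan}. The only step deserving care is the interplay between the rescaling and the lattice operations, and even that is immediate once one notes that positive diagonal scaling acts coordinatewise and therefore commutes with componentwise $\max$ and $\min$. In short, the corollary reduces to the remark that a positive diagonal rescaling is simultaneously a bijection of the capacity regions and an isomorphism of the dominance lattice, combined with the flow--bond duality of Theorem~\ref{thm:plan} and the lattice statement of Theorem~\ref{thm:genbond}.
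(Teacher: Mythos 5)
Your proposal is correct and takes essentially the same approach as the paper: rescale the capacities through the positive diagonal bijection $S(\sigma)$ of Theorem~\ref{thm:plan} and transport the distributive lattice structure provided by Theorem~\ref{thm:genbond} (the paper cites Theorem~\ref{thm:gendelta}, which is the same statement specialized to $\Delta=\mathbf{0}$). The only inessential slip is your closing justification: the lattice operations on $\mathcal{B}(D^*_{\Lambda^*})_{\leq S(\sigma)c}$ are not componentwise $\max$ and $\min$ (the bond set is generally not a D-polyhedron; its lattice structure is inherited from the potential polyhedron via the network matrix), so the observation that $S(\sigma)$ commutes with componentwise operations is beside the point---what actually transports the structure is simply that $S(\sigma)$ restricts to a bijection between the two capacity-bounded sets, which your first paragraph already establishes.
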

\begin{proof}
  The matrix $S(\sigma)$ is an isomorphism between
  $\mathcal{F}(D_{\Lambda})$ and $\mathcal{B}(D^*_{\LambdaSt})$. Since
  $\sigma$ is positive we obtain $\mathcal{F}(D_{\Lambda})_{\leq c}
  = S(\sigma)\big(\mathcal{B}(D^{*}_{\LambdaSt})_{\leq
    S(\sigma)c}\big)$. Theorem~\ref{thm:gendelta} implies a distributive
   lattice structure on $\mathcal{B}(D^{*}_{\LambdaSt})_{\leq
    S(\sigma)c}$ which can be pushed to
   $\mathcal{F}(D_{\Lambda})$.
\end{proof}

In fact Theorem~\ref{thm:gendelta} even allows us to obtain a
distributive lattice structure for planar generalized flows of
breakeven digraphs with an arbitrarily prescribed excess at every
vertex.
\medskip

The reader may worry about the existence of non-trivial
arc-parameterizations $\Lambda$ of a digraph $D$ such that $D_\Lambda$
is breakeven. Here is a nice construction for such parameterizations.
Let $D$ be arbitrary and $x\in\mathbb{R}^m$ be a $\mathbf{0}$-bond of $D$, i.e.,
$\delta(C,x):=\sum_{a\in C^{+}}x_a-\sum_{a\in C^{-}}x_a = 0$ for all
oriented cycles $C$. Consider $\lambda=\exp(x)$ and note that $\lambda_a \geq 0$
for all arcs $a$ and that
$\lambda(C)=
\big(\prod_{a\in C^{+}}\lambda_{a}\big)\big(\prod_{a\in C^{-}}\lambda_{a}\big)^{-1}
=\exp( \delta(C,x) ) = 1$
for all oriented cycles $C$. Hence weighting the arcs of with $\lambda$
yields a breakeven arc-parameterization of $D$.
This construction is universal in the sense that application of the logarithm 
to a breakeven parameterization yields a $\mathbf{0}$-bond.

\section{Conclusions and Open Questions}\label{sec:conco}
\def\Problem{\par\smallskip\ni{\bf Problem.}\ }
\def\topic#1{\par\ni{\sf #1}\ }

\topic{Old and New:} In the present paper we have obtained a distributive
lattice representation for the set of real-valued generalized
$\Delta$-bonds of an arc parameterized digraph. The proof is based on
the bijection with potentials which allows us to push the obvious lattice
structure based on componentwise max and min from potentials to
generalized bonds. Consequently we obtain a distributive lattice on
generalized bonds in terms of its join and meet.  In~\cite{Fel-08} we
obtained the distributive lattice structure on integral
$\Delta$-bonds, by showing that we can build the cover-graph of a
distributive lattice by local vertex-push-operations and reach every
$\Delta$-bond this way.  This qualitatively different distributive
lattice representation was possible because we could assume the
digraph to be \textit{reduced} in a certain way. 
\Problem Is there a way to
reduce an arc-parameterized digraph such that the distributive lattice
on its generalized bonds can be constructed locally by
\textit{pushing} vertices?

\bigskip

\topic{Order Theory:} There is a natural finite distributive lattice
associated to a D-polyhedron $P$. Start from the vertices of $P$ and
consider the closure of this set under join and meet.  Let $L(P)$ be
the resulting distributive \textit{vertex lattice} of $P$. It would be
interesting to know what information regarding $P$ is already contained
in $L(P)$.  
\Problem What do the generalized bonds associated to the
elements of $L(P)$ look like? In particular some special generalized bonds
of $L(P)$ including join-irreducible, minimal and maximal elements are of interest.

\bigskip

\topic{Geometry:} We have derived an $\mathcal{H}$-description of
D-polyhedra.
\Problem What does a $\mathcal{V}$-description
look like?\\
(This again asks for a special set of elements
of the vertex lattice $L(P)$.)

In fact, the previous problem can be `turned around': For every
distributive lattice $L$ there are integral D-polyhedra such that the 
integral points in the polyhedron form a lattice isomorphic to $L$.
\Problem Which subsets of $L$ can arise as sets of vertices of such
polyhedra? 
\bigskip

\topic{Optimization:} There has been a considerable amount of research
concerned with algorithms for generalized flows, see~\cite{Ahu-93}
for references. As far as we know it has never been taken into
account that the LP-dual problem of a min-cost generalized flow is an
optimization problem on a D-polyhedron. We feel that it might be fruitful to
look at this connection. A special case is given by generalized
flows of planar breakeven digraphs, where the flow-polyhedron
also forms a distributive lattice (Corollary~\ref{cor:plan}).

In particular, it would be interesting to understand the integral
points of a D-polyhedron, which by Observation~\ref{obs:inter} form a
distributive lattice. Related to this and to~\cite{Fel-08} is the
following:
\Problem
Find conditions on $\Lambda$ and $c$ such that the set of
integral generalized bonds for these parameters forms 
a distributive lattice.

\subsection*{Acknowledgments}
We thank G\"unter Rote for the hint to look at generalized flows,
Torsten Ueckerdt for fruitful discussions and Anton Dochterman for his
help with the exposition.


 \bibliography{Bibstuff/literature,Bibstuff/antrag07-bib}
\bibliographystyle{Bibstuff/my-siam}


\end{document}